\newtheorem{theorem}{Theorem}[section]
\newtheorem{lemma}[theorem]{Lemma}
\newtheorem{proposition}[theorem]{Proposition}
\newtheorem{corollary}[theorem]{Corollary}
\newtheorem{conjecture}[theorem]{Conjecture}
\theoremstyle{definition}
\newtheorem{definition}[theorem]{Definition}
\newtheorem{example}[theorem]{Example}
\theoremstyle{remark}
\numberwithin{equation}{section}
\newcommand{\sgn}{\operatorname{sgn}}
\newcommand{\R}{{\mathbb R}}
\newcommand{\Z}{{\mathbb Z}}
\newcommand{\N}{{\mathbb N}}
\newcommand{\T}{{\mathbb T}}
\newcommand{\bfk}{{\mathbf k}}
\newcommand{\lap}[1]{\sqrt{-\Delta_{#1}}}
\newcommand{\I}{\mathcal I}
\newcommand{\tgamma}{\tilde \gamma}
\newcommand{\diam}{\operatorname{diam}}
\renewcommand{\epsilon}{\varepsilon}
\newcommand{\supp}{{\operatorname{supp}}}
\begin{document}

\title[integrals of eigenfunctions over curves]{explicit bounds on integrals of eigenfunctions over curves in surfaces of nonpositive curvature}


\author{Emmett L. Wyman}
\address{Johns Hopkins University}
\curraddr{}
\email{ewyman3@math.jhu.edu}
\thanks{}


\date{}
\dedicatory{}

\begin{abstract}
Let $(M,g)$ be a compact Riemannian surface with nonpositive sectional curvature and let $\gamma$ be a closed geodesic in $M$. And let $e_\lambda$ be an $L^2$-normalized eigenfunction of the Laplace-Beltrami operator $\Delta_g$ with $-\Delta_g e_\lambda = \lambda^2 e_\lambda$. Sogge, Xi, and Zhang ~\cite{Gauss} showed using the Gauss-Bonnet theorem that
\[
	\int_\gamma e_\lambda \, ds = O((\log\lambda)^{-1/2}),
\]
an improvement over the general $O(1)$ bound. We show this integral enjoys the same decay for a wide variety of curves, where $M$ has \emph{nonpositive} sectional curvature. These are the curves $\gamma$ whose geodesic curvature avoids, pointwise, the geodesic curvature of circles of infinite radius tangent to $\gamma$.
\end{abstract}

\maketitle


\section{Introduction}

\subsection{Background}

Let $(M,g)$ be a compact, boundaryless, 2-dimensional Riemannian manifold. Let $\Delta_g$ denote the Laplace-Beltrami operator and $e_\lambda$ an $L^2$-normalized eigenfunction of $\Delta_g$ on $M$, i.e.
\[
    -\Delta_g e_\lambda = \lambda^2 e_\lambda \qquad \text{ and } \qquad \| e_\lambda \|_{L^2(M)} = 1.
\]
Good ~\cite{Good} and Hejhal ~\cite{Hej} showed that if $M$ is a hyperbolic surface and $\gamma$ is a periodic geodesic in $M$,
\begin{equation}\label{good hejhal bound}
	\int_\gamma e_\lambda \, ds = O(1).
\end{equation}
These geodesic period integrals, and more generally the Fourier series of eigenfunctions restricted to periodic geodesics, are of interest in the spectral theory of automorphic forms.

Good and Hejhal's bound was later extended to the general Riemannian setting by Zelditch ~\cite{ZelK} who provided the following powerful result. Let $M$ be any $n$-dimensional, compact, Riemannian manifold without boundary. Let $e_j$ for $j = 1,2,\ldots$ comprise a Hilbert basis of eigenfunctions with corresponding eigenvalues $\lambda_j$. If $\Sigma$ is a $d$-dimensional submanifold in $M$ and $d\sigma$ is a smooth, compactly supported multiple of the surface measure on $\Sigma$, then Zelditch provides a Kuznecov asymptotic formula,
\begin{equation} \label{zelditch kuz}
	\sum_{\lambda_j \leq \lambda} \left| \int_\Sigma e_j \, d\sigma \right|^2 \sim \lambda^{n-d} + O(\lambda^{n-d-1})
\end{equation}
where the implicit constant in front of the main term is nonzero provided $d\sigma$ is nonnegative and not identically zero.
As a consequence, we have
\[
	\sum_{\lambda_j \in [\lambda, \lambda+1]} \left| \int_\Sigma e_j \, d\sigma \right|^2 = O(\lambda^{n-d-1}).
\]
Taking the $n = 2$, $d = 1$ case provides Good and Hejhal's $O(1)$ bound\footnote{Zelditch's Kuznecov formula also tells us that generic eigenfunctions satisfy much better bounds. One can use an extraction argument to show that if $R(\lambda) \to \infty$, there exists a density $1$ subsequence of eigenfunctions for which the left hand side of \eqref{good hejhal bound} is $O(R(\lambda) \lambda^{-d/2})$, or $O(R(\lambda)\lambda^{-1/2}$ in the case of a curve.} regardless of hypotheses on the curvature of $M$ or $\gamma$.

In ~\cite{Rez}, Reznikov demonstrates the bound \eqref{good hejhal bound} for both periodic geodesics and circles in hyperbolic surfaces, and goes further to conjecture the following.
\begin{conjecture}[Reznikov] Let $M$ be a compact hyperbolic surface and $\gamma$ a closed geodesic or circle in $M$. Then,
\[
	\int_\gamma e_\lambda \, ds = O(\lambda^{-1/2 + \epsilon})
\]
for all $\epsilon > 0$.
\end{conjecture}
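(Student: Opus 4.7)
The plan is to attack Reznikov's conjecture by exploiting the rich arithmetic structure of the hyperbolic surface $M = \Gamma \backslash \mathbb{H}^2$ via the representation theory of $PSL(2,\R)$ and the Rankin-Selberg method. Each eigenfunction $e_\lambda$ is a vector in a cuspidal automorphic representation $\pi$ of $PSL(2,\R)$ with spectral parameter of size $\lambda$. A closed geodesic $\gamma$ is (up to a shift) the image of the axis of a hyperbolic element $\sigma \in \Gamma$, and its stabilizer is a cyclic infinite subgroup $\langle \sigma \rangle$; the integral $\int_\gamma e_\lambda \, ds$, possibly twisted by a character of $\langle \sigma \rangle$, is then a toric/hyperbolic period in the sense of Waldspurger. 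For a circle one makes the analogous interpretation with the elliptic or compact torus stabilizing the curve.

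First I would invoke a Waldspurger-type period formula (cf.\ Bernstein-Reznikov) to identify
\[
    \Bigl| \int_\gamma e_\lambda \, ds \Bigr|^2 = \frac{L(1/2, \pi \otimes \chi_\gamma)}{L(1, \mathrm{Ad}\,\pi)} \cdot (\text{archimedean factor})
\]
up to a bounded ratio, where the archimedean factor has size $\lambda^{-1}$ by standard Gamma-asymptotics, and $L(1, \mathrm{Ad}\,\pi) \asymp 1$ up to $\lambda^{\pm \epsilon}$. This reduces the conjecture to a spectral-aspect subconvexity bound
\[
    L(1/2, \pi \otimes \chi_\gamma) = O(\lambda^{1/2 - \delta})
\]
for some $\delta > 0$. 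The convexity bound on the right would be $O(\lambda^{1/2 + \epsilon})$, and it reproduces Good and Hejhal's $O(1)$ bound; the Lindel\"of prediction $O(\lambda^\epsilon)$ matches Reznikov's conjecture exactly.

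The main obstacle is establishing the required subconvexity. Spectral-aspect subconvexity for $GL(2)$ $L$-functions over a general compact hyperbolic quotient is not available: known breakthroughs (Bernstein-Reznikov via triple products, Michel-Venkatesh, Jutila) require arithmetic input and congruence structure that a generic compact $\Gamma$ does not possess. As an alternative, one might try to extract polynomial cancellation directly from Zelditch's Kuznecov formula \eqref{zelditch kuz}, which says each period is $O(1)$ on average; iterating the wave-group parametrix of Sogge-Xi-Zhang up to the Ehrenfest time $\tfrac{1}{2}(\log \lambda)/h_{\mathrm{top}}$ and exploiting the exponential mixing of the geodesic flow gives \emph{some} savings, but purely semiclassical methods saturate at the Ehrenfest barrier and produce only logarithmic improvement of the form $(\log \lambda)^{-1/2}$ -- which is precisely the improvement obtained in the present paper. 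Polynomial savings over $O(1)$ appear to require genuine arithmetic or automorphic input, and this is why the conjecture remains open.
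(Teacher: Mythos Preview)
The statement you are addressing is a \emph{conjecture}, not a theorem: the paper does not prove it and explicitly remarks (in a footnote) that ``Reznikov's conjectured bound, or any polynomial improvement to the bound of \eqref{good hejhal bound} for that matter, seems to be inaccessible with standard techniques.'' There is therefore no ``paper's own proof'' to compare against. The paper's actual contribution is the much weaker bound $O((\log\lambda)^{-1/2})$ of Theorem~\ref{main corollary}, obtained by the semiclassical/wave-kernel argument you summarize at the end of your proposal.

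Your proposal is not a proof either, and to your credit you say so yourself. The outline via Waldspurger-type period formulas and $L$-functions is the standard heuristic for why Reznikov's conjecture should be of Lindel\"of strength, but the two essential steps are both genuine gaps: (i) a precise period formula of the shape you write is not available for an arbitrary cocompact $\Gamma$ (and even in arithmetic cases the archimedean factor requires care), and (ii) the spectral-aspect subconvexity you would need is, as you note, not known in this generality. Your closing paragraph correctly diagnoses why purely dynamical methods stall at the Ehrenfest barrier and yield only the logarithmic gain of the present paper. So your write-up is a reasonable \emph{discussion} of the difficulty of the conjecture, but it should not be labeled a proof proposal; nothing in it establishes the claimed $O(\lambda^{-1/2+\epsilon})$ bound.
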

\noindent The first improvement\footnote{Reznikov's conjectured bound, or any polynomial improvement to the bound of \eqref{good hejhal bound} for that matter, seems to be inaccessible with standard techniques. But, it does hold for circles in the flat torus as we will see later in this section.} towards Reznikov's conjecture is due to Chen and Sogge ~\cite{CSper}, who obtained a little-$o$ improvement over the standard $O(1)$ bound for geodesics in compact surfaces with (not necessarily constant) negative curvature. Their strategy involved a lift to the universal cover, using the Hadamard parametrix to write the relevant quantity as an oscillatory integral with a geometric phase function, and using the Gauss-Bonnet theorem to show that critical points of the phase function are isolated. Recently Sogge, Xi, and Zhang ~\cite{Gauss} improved this bound further to $O((\log \lambda)^{-1/2})$ while also allowing the sectional curvature of $M$ to vanish of finite order.

Following Chen and Sogge's strategy, the author ~\cite{emmett1} used Jacobi fields to show the little-$o$ bound of ~\cite{CSper} holds in the broader scenario where $M$ has nonpositive sectional curvature and the geodesic curvature of $\gamma$ avoids that of circles of infinite radius tangent to $\gamma$. As a corollary, integrals of eigenfunctions over both geodesics and circles in hyperbolic manifolds enjoy the Chen and Sogge's $o(1)$ bound. Following Sogge, Xi, and Zhang's example, we improve this to $O((\log \lambda)^{-1/2})$ decay under the same hypotheses of ~\cite{emmett1}, albeit without the weakened sectional curvature hypotheses of ~\cite{Gauss}.

\subsection{Statement of results}

We require some notation to state our main result. If $\gamma$ is a curve in $M$, we denote by $\kappa_\gamma(t)$ the geodesic curvature of $\gamma$ at $t$, i.e.
\[
    \kappa_\gamma(t) = \frac{1}{|\gamma'(t)|} \left| \frac{D}{dt} \frac{\gamma'(t)}{|\gamma'(t)|} \right|,
\]
where $D/dt$ is the covariant derivative in the parameter $t$. For fixed $p \in M$ and $v \in T_pM$, we denote by $v^\perp$ a choice of vector in $T_pM$ for which $|v^\perp| = |v|$ and $\langle v^\perp, v \rangle = 0$. We denote by $S_pM$ the unit-length vectors in $T_pM$, and we let $SM$ denote the unit sphere bundle over $M$.

To state our main result, we need a function $\mathbf k$ on the unit sphere bundle of $M$ representing a ``critical geodesic curvature" which we assume $\gamma$ avoids to obtain decay of the integral in \eqref{good hejhal bound}.

\begin{definition} \label{def k}
    Fix $p \in M$ and $v \in S_pM$ and let $t \mapsto \zeta(t)$ the unit speed geodesic with $\zeta(0) = p$ and $\zeta'(0) = v$. Let $J$ be a Jacobi field along $\zeta$ satisfying
    \begin{equation} \label{J initial condition}
        J(0) = \zeta'(0)^\perp.
    \end{equation}
    We let $\bfk(v)$ denote the unique number such that
    \begin{equation} \label{J bounded}
        |J(r)| = O(1) \quad \text{ for } r \leq 0
    \end{equation}
    if $J$ satisfies the additional initial condition
    \begin{equation} \label{J' initial condition}
        \frac{D}{dr} J(0) = \bfk(v) J(0).
    \end{equation}
\end{definition}

$\bfk$ is a well-defined, continuous function on $SM$ by ~\cite[Proposition 4.1]{emmett1}, though we include the proof here as Proposition \ref{def k good} for completeness. The geometric meaning of $\bfk$ is clearer after a lift to the universal cover. By the theorem of Hadamard, we identify the universal cover of $M$ with $(\R^2, \tilde g)$, where $\tilde g$ is the pullback of the metric tensor $g$ through the covering map. If $p \in M$ and $\tilde p \in \R^2$ a lift of $p$ through the covering map, and $v \in S_pM$ and $\tilde v \in S_{\tilde p}\R^2$ the lift of $v$, the argument in Proposition \ref{def k good} reveals $\mathbf k(v)$ to be the limiting curvature of the circle at $\tilde p$ with center taken to infinity along the geodesic ray in direction $-\tilde v$ (see Figure \ref{kfig}). In the flat case, $\mathbf k \equiv 0$. If $M$ is a hyperbolic surface with sectional curvature $-1$, part (2) of Lemma \ref{large radius} tells us $\bfk \equiv 1$, the curvature of a horocycle in the hyperbolic plane. Our main result is as follows.

\begin{figure}
	\centering
	\includegraphics[width=.8\textwidth]{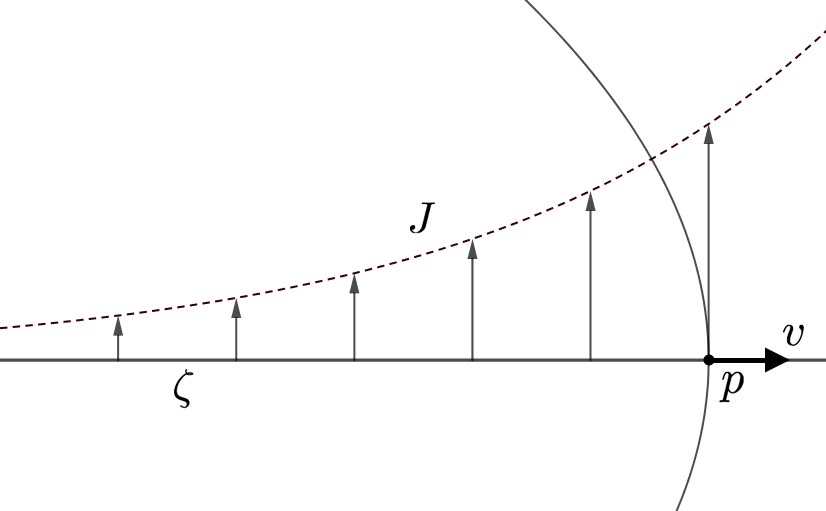}
	\caption{A Jacobi Field $J$ satisfying Definition \ref{def k}. $\bfk$ is equal to the geodesic curvature of the limiting circle on the right.}
	\label{kfig}
\end{figure}

\begin{theorem} \label{main theorem}
Let $(M,g)$ be a compact, boundaryless, 2-dimensional Riemannian manifold with nonpositive sectional curvature. Let $b$ be a smooth, compactly supported function on $\R$ and $\gamma$ be a unit-speed curve in $M$ parametrized on an interval containing the support of $b$. Then,
\begin{equation} \label{main theorem bound}
	\sum_{\lambda_j \in [\lambda, \lambda + (\log \lambda)^{-1}]} \left| \int b(s) e_j(\gamma(s)) \, ds \right|^2 = O((\log \lambda)^{-1})
\end{equation}
provided that
\begin{equation} \label{curvature hypotheses}
        \kappa_\gamma(s) \neq \bfk(\gamma'^\perp(s)) \quad \text{ and } \quad \kappa_\gamma(s) \neq \bfk(-\gamma'^\perp(s))
\end{equation}
for each $s \in \supp b$.
\end{theorem}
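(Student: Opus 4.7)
The plan is to use a standard $TT^*$ reproducing kernel argument to reduce \eqref{main theorem bound} to a bilinear estimate against the Schwartz kernel of a spectral cluster projector, and then analyze that kernel via the Hadamard parametrix on the universal cover of $M$. Fix a Schwartz $\rho$ with $\rho(0) = 1$, $\rho \geq 0$, and $\hat \rho \in C_c^\infty((-1,1))$, and set $T = c\log \lambda$ for a small constant $c > 0$ to be chosen. With $P = \lap{g}$, one has $\rho(T(\lambda - \lambda_j))^2 \gtrsim 1$ when $\lambda_j \in [\lambda, \lambda + T^{-1}]$, so
\[
    \sum_{\lambda_j \in [\lambda, \lambda + T^{-1}]} \left|\int b(s) e_j(\gamma(s))\, ds\right|^2 \lesssim \int \int b(s)\, \overline{b(s')}\, K_T(\gamma(s), \gamma(s'))\, ds\, ds',
\]
where $K_T$ is the Schwartz kernel of $\rho^2(T(\lambda - P))$. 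The goal becomes to show the right-hand side is $O(T^{-1})$.

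Expressing $K_T$ via the even half-wave propagator as $(2\pi T)^{-1} \int \widehat{\rho^2}(t/T)\, e^{it\lambda}\, [\cos(tP)](x, y)\, dt$, I would split the $t$-integration into a short-time regime ($|t| \leq 2$) and a long-time regime ($2 \leq |t| \leq T$). The short-time piece uses only the local Hadamard parametrix; a routine stationary phase in the frequency and time variables yields an $O(T^{-1})$ contribution, independent of any curvature hypothesis on $M$ or $\gamma$. The long-time piece carries the real content: lift to the universal cover $(\R^2, \tilde g)$, fix a lift $\tgamma$ of $\gamma$, and expand the wave kernel on $M$ as a sum over deck transformations $\alpha$ of the wave kernel on $\R^2$ at $(\tgamma(s), \alpha \tgamma(s'))$. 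Applying the universal-cover Hadamard parametrix to each summand and integrating out the frequency and time variables by stationary phase reduces the $\alpha$-contribution to an oscillatory integral in $(s, s')$ whose phase is a multiple of $\lambda\, d_{\tilde g}(\tgamma(s), \alpha \tgamma(s'))$ and whose amplitude is supported where $d_{\tilde g}(\tgamma(s), \alpha \tgamma(s')) \lesssim T$.

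The critical points in $(s, s')$ of this phase are pairs where the minimizing geodesic from $\tgamma(s)$ to $\alpha\tgamma(s')$ meets the lifts of $\gamma$ orthogonally at both endpoints. A Jacobi field computation shows that the Hessian at such a critical point factors as a product of two terms, one of the form $\kappa_\gamma(s) - \kappa_r(s)$ with $\kappa_r(s)$ the geodesic curvature at $\tgamma(s)$ of the circle of radius $r = d_{\tilde g}(\tgamma(s), \alpha\tgamma(s'))$ tangent to the lift of $\gamma$ and centered on the opposite side from $\alpha\tgamma(s')$, and an analogous factor at $s'$. The central obstacle is to show this Hessian is uniformly bounded away from zero across the relevant deck transformations. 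For $\alpha$ with $r$ large, which comprise the bulk by the exponential volume growth of $\tilde M$, the curvatures $\kappa_r(s)$ converge to $\bfk(\pm \gamma'^\perp(s))$ by Definition \ref{def k} and Proposition \ref{def k good}, and hypothesis \eqref{curvature hypotheses} provides uniform nondegeneracy; the finitely many short deck transformations are handled separately using the continuity of $r \mapsto \kappa_r$ together with \eqref{curvature hypotheses}.

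With nondegeneracy secured, stationary phase in $(s, s')$ yields enough decay per deck transformation to absorb the at-most-exponential count $O(e^{CT})$ of deck transformations with $d_{\tilde g}(\tgamma(s), \alpha \tgamma(s')) \lesssim T$; choosing $c$ sufficiently small makes the total long-time contribution fit inside the $O(T^{-1})$ budget, completing the proof of \eqref{main theorem bound}.
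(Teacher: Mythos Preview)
Your overall strategy---spectral localization at scale $T=c\log\lambda$, reduction to a bilinear kernel, short-/long-time split, lift to the universal cover, and stationary phase in $(s,s')$ with phase $\lambda\,d_{\tilde g}(\tgamma(s),\alpha\tgamma(s'))$---is the paper's strategy, and your large-$r$ analysis via $\kappa_r\to\bfk$ to secure uniform Hessian nondegeneracy agrees with Sections~\ref{PHASE FUNCTION BOUNDS}--\ref{CURVATURE OF CIRCLES} and the proof of Proposition~\ref{large time}.

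There is, however, a genuine gap in your treatment of the finitely many short deck transformations. You assert these are handled ``using the continuity of $r\mapsto\kappa_r$ together with \eqref{curvature hypotheses},'' but \eqref{curvature hypotheses} only separates $\kappa_\gamma$ from $\bfk=\lim_{r\to\infty}\kappa_r$; it says nothing about $\kappa_r$ for moderate $r$. Since $\kappa_r\to\infty$ as $r\to 0$ while $\kappa_r>\bfk$ for all $r>0$ (Lemma~\ref{large radius}), any curve with $\kappa_\gamma(s_0)>\bfk$ admits radii $r_0$ with $\kappa_{r_0}=\kappa_\gamma(s_0)$ by the intermediate value theorem, and the diagonal Hessian entry you identify can genuinely vanish for a nearby $\alpha$. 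The paper's remedy (Proposition~\ref{medium time}) does \emph{not} invoke \eqref{curvature hypotheses} for these terms at all. Instead it uses a fixed-point obstruction: if $\partial_s^2\phi_\alpha$ vanishes at a diagonal critical point ($s_0=t_0$, $\nabla\phi_\alpha=0$), then $\tgamma_\alpha$ must curve \emph{toward} $\tgamma(t_0)$ along the common normal geodesic, and $\tgamma$ is then forced to curve \emph{away} from $\tgamma_\alpha(s_0)$---for otherwise the midpoint of that geodesic would be fixed by the nontrivial deck transformation $\alpha$. This forces $\partial_t^2\phi_\alpha\neq 0$, and one-variable stationary phase gives an $O(\lambda^{-1/2})$ bound per term, which suffices for finitely many $\alpha$. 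You need this (or an equivalent) mechanism to close the proof.
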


We obtain our desired bound as an immediate corollary to Theorem \ref{main theorem}.

\begin{theorem} \label{main corollary}
    Let $M$, $b$, and $\gamma$ be as in Theorem \ref{main theorem} with $\gamma$ satisfying \eqref{curvature hypotheses}. Then,
    \begin{equation} \label{main corollary bound}
        \int b(s) e_\lambda(\gamma(s)) \, ds = O((\log \lambda)^{-1/2}).
    \end{equation}
\end{theorem}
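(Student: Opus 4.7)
The plan is to deduce Theorem \ref{main corollary} directly from Theorem \ref{main theorem} by retaining only a single term in the sum on the left-hand side of \eqref{main theorem bound}. First, I would fix the eigenfunction $e_\lambda$ and complete $\{e_\lambda\}$, together with any orthonormal basis of the orthogonal complement of $e_\lambda$ within the $\lambda^2$-eigenspace, to a Hilbert basis $\{e_j\}_{j=1}^\infty$ of $L^2(M)$ consisting of eigenfunctions of $-\Delta_g$. In this basis, $e_\lambda$ appears as some $e_{j_0}$ with eigenvalue $\lambda_{j_0} = \lambda$, which trivially lies in the window $[\lambda, \lambda + (\log\lambda)^{-1}]$.

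Since every summand on the left-hand side of \eqref{main theorem bound} is nonnegative, the $j_0$-th term is bounded above by the full sum, so that
\[
    \left| \int b(s) e_\lambda(\gamma(s)) \, ds \right|^2 \leq \sum_{\lambda_j \in [\lambda, \lambda + (\log \lambda)^{-1}]} \left| \int b(s) e_j(\gamma(s)) \, ds \right|^2 = O((\log \lambda)^{-1}),
\]
where the last equality is Theorem \ref{main theorem} applied to this choice of Hilbert basis (note that the hypotheses on $M$, $b$, and $\gamma$ are unchanged). Taking square roots then yields \eqref{main corollary bound}.

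There is no real obstacle here, as all of the analytic substance is packaged inside Theorem \ref{main theorem}. The only point worth flagging is the minor technicality that the statement of Theorem \ref{main corollary} speaks of a single eigenfunction $e_\lambda$ rather than a fixed Hilbert basis, which is resolved by the basis-completion step above; after that, the corollary reduces to dropping positive terms and extracting a square root.
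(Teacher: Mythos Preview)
Your proposal is correct and matches the paper's approach: the paper simply states that Theorem \ref{main corollary} is ``an immediate corollary'' of Theorem \ref{main theorem}, and your argument---dropping all but the single nonnegative term corresponding to $e_\lambda$ in \eqref{main theorem bound} and taking a square root---is exactly the intended deduction.
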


To prove Theorem \ref{main theorem}, we follow Sogge, Xi, and Zhang's argument in ~\cite{Gauss} to reduce the problem to an oscillatory integral with a geometric phase function. We then exploit properties of the curvature of circles of increasing radius to obtain bounds on the Hessian of the phase function, and conclude with a stationary phase argument.

There are a couple useful corollaries to Theorem \ref{main theorem} which are worth pointing out.
We will find by part (2) of Lemma \ref{large radius} that if the sectional curvature $K$ is bounded by
\[
	0 \geq K_0 \geq K \geq K_1
\]
for some constants $K_0$ and $K_1$, then
\[
	\sqrt{-K_0} \leq \bfk \leq \sqrt{-K_1}.
\]
This yields an easy criterion for determining $\gamma$ which satisfy the hypotheses \eqref{curvature hypotheses} of Theorem \ref{main theorem}.

\begin{corollary}\label{criterion corollary}
	Let $M$, $b$, and $\gamma$ be as in Theorem \ref{main theorem}, and suppose the sectional curvature $K$ of $M$ is bounded as above. If
	\[
		\kappa_\gamma(s) < \sqrt{-K_0} \qquad \text{ or } \qquad \kappa_\gamma(s) > \sqrt{-K_1}
	\]
	for all $s \in \supp b$, then \eqref{main theorem bound} and hence \eqref{main corollary bound} hold.
\end{corollary}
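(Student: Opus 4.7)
The plan is to derive Corollary \ref{criterion corollary} as an immediate application of Theorem \ref{main theorem}, with the only substantive point being that the curvature bound assumed in the corollary forces the hypothesis \eqref{curvature hypotheses}. The bridge between the two statements is the $\bfk$-sandwich
\[
\sqrt{-K_0} \,\leq\, \bfk(v) \,\leq\, \sqrt{-K_1} \qquad \text{for all } v \in SM,
\]
which the text attributes to part (2) of Lemma \ref{large radius}. The natural proof of that sandwich is a Sturm-type comparison: writing the scalar Jacobi equation $y'' + K(r)\,y = 0$ for the signed perpendicular component of the Jacobi field $J$ along the geodesic $\zeta$ of Definition \ref{def k}, one compares against the constant-curvature model equations $y'' + K_i\,y = 0$, whose (up to scaling) unique solutions bounded on $(-\infty,0]$ are $c\,e^{\sqrt{-K_i}\,r}$. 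Reading off the initial logarithmic derivative at $r = 0$ gives $\bfk \equiv \sqrt{-K_i}$ in the two model cases, and Sturm comparison pinches the variable-curvature value of $\bfk$ between them.

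Granted the sandwich, the corollary follows in a single line. For each $s \in \supp b$ and each choice of sign, the hypothesis $\kappa_\gamma(s) < \sqrt{-K_0}$ gives
\[
\kappa_\gamma(s) \,<\, \sqrt{-K_0} \,\leq\, \bfk(\pm\gamma'^\perp(s)),
\]
while the alternative hypothesis $\kappa_\gamma(s) > \sqrt{-K_1}$ gives the reverse strict inequality $\kappa_\gamma(s) > \sqrt{-K_1} \geq \bfk(\pm\gamma'^\perp(s))$. In either case, both non-degeneracy conditions in \eqref{curvature hypotheses} are verified pointwise on $\supp b$. Theorem \ref{main theorem} then yields \eqref{main theorem bound}, and \eqref{main corollary bound} is obtained by retaining a single eigenvalue inside the spectral window $[\lambda,\lambda+(\log\lambda)^{-1}]$ (so that the positive square-summed expression controls each of its terms).

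There is no real obstacle in the argument presented here, since the two genuinely nontrivial inputs—the comparison $\sqrt{-K_0} \leq \bfk \leq \sqrt{-K_1}$ and the main decay estimate for the $L^2$-window sum—are packaged into Lemma \ref{large radius} and Theorem \ref{main theorem} respectively. The only minor care needed is to note that the strict inequalities on $\kappa_\gamma$ survive the possibly non-strict inequalities in the $\bfk$-sandwich, which is automatic because the hypotheses of the corollary are strict on both sides.
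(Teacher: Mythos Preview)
Your proposal is correct and matches the paper's own treatment: the corollary is stated as an immediate consequence of the sandwich $\sqrt{-K_0}\le\bfk\le\sqrt{-K_1}$ from part~(2) of Lemma~\ref{large radius}, which forces the hypothesis~\eqref{curvature hypotheses} of Theorem~\ref{main theorem}. Your aside about a Sturm-type argument for the sandwich is reasonable, though in the paper that inequality is actually proved via the Riccati-type ODE~\eqref{k ode} rather than by direct comparison of Jacobi solutions.
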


Note if the sectional curvature of $M$ is strictly negative, we can take $K_0 > 0$. The corollary then implies the main result in ~\cite{Gauss} for geodesics minus the weakened hypotheses which allow $K$ to vanish of finite order. If $\gamma$ is a geodesic circle, we can say something else.

\begin{corollary} \label{circle corollary} Let $M$ and $b$ be as in Theorem \ref{main theorem}, let the sectional curvature $K$ be bounded between $K_0$ and $K_1$ be as above, and let $\gamma$ be a unit-speed geodesic circle of radius $r > 0$. Then, \eqref{main theorem bound} and hence \eqref{main corollary bound} hold provided
\begin{align*}
	r &< \frac{1}{2\sqrt{-K_0}} \log\left( \frac{\sqrt{-K_1} + \sqrt{-K_0}}{\sqrt{-K_1} - \sqrt{-K_0}} \right)  &\text{ in the case } 0 > K_0 > K_1, \\
	r &< 1/\sqrt{-K_1}  &\text{ in the case } 0 = K_0 > K_1,
\end{align*}
or for all $r$ in the case where the sectional curvature is constant.
\end{corollary}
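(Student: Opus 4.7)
The strategy is to verify the hypothesis of Corollary \ref{criterion corollary} for a geodesic circle $\gamma$ of radius $r$. Since any geodesic circle in nonpositive curvature automatically has $\kappa_\gamma > \sqrt{-K_0}$ (as will be shown below), the first alternative $\kappa_\gamma < \sqrt{-K_0}$ in that corollary is unavailable, and so the proof reduces to showing the second alternative $\kappa_\gamma(s) > \sqrt{-K_1}$ pointwise. The stated upper bounds on $r$ are precisely what ensure this.

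I would first recall the Jacobi-field description of $\kappa_\gamma$. Parametrizing the points of the circle as $q = \exp_p(rv)$ for $v \in S_p M$, let $\phi$ be the scalar along the radial geodesic $t \mapsto \exp_p(tv)$ solving $\phi'' + K(t)\phi = 0$, $\phi(0) = 0$, $\phi'(0) = 1$, where $K(t)$ is the Gaussian curvature along that radial geodesic. The standard mean-curvature formula for geodesic spheres in dimension two gives $\kappa_\gamma(s) = \phi'(r)/\phi(r)$ at the corresponding arc-length parameter $s$. I would then Sturm-compare to the constant curvature $K_0$ model. Let $\phi_0$ solve the analogous equation with $K$ replaced by $K_0$, so $\phi_0(t) = \sinh(\sqrt{-K_0}\,t)/\sqrt{-K_0}$ (or $\phi_0(t) = t$ when $K_0 = 0$). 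The Wronskian $W := \phi'\phi_0 - \phi\phi_0'$ satisfies $W(0) = 0$ and
\[
    W'(t) = (K_0 - K(t))\,\phi(t)\phi_0(t) \geq 0,
\]
since $K \leq K_0$ and both $\phi, \phi_0$ are positive on $(0,\infty)$ by the absence of conjugate points in nonpositive curvature. Dividing by $\phi\phi_0 > 0$ therefore yields
\[
    \kappa_\gamma(s) \geq \phi_0'(r)/\phi_0(r) = \sqrt{-K_0}\coth(\sqrt{-K_0}\,r),
\]
with the right side interpreted as $1/r$ when $K_0 = 0$. In particular $\kappa_\gamma > \sqrt{-K_0}$ always, as claimed.

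Finally I would solve $\sqrt{-K_0}\coth(\sqrt{-K_0}\,r) > \sqrt{-K_1}$ algebraically. Writing $\coth x = (e^{2x} + 1)/(e^{2x} - 1)$, the inequality rearranges to
\[
    e^{2\sqrt{-K_0}\,r} < \frac{\sqrt{-K_1} + \sqrt{-K_0}}{\sqrt{-K_1} - \sqrt{-K_0}},
\]
giving the first claimed bound in the case $0 > K_0 > K_1$. The case $0 = K_0 > K_1$ reduces directly to $1/r > \sqrt{-K_1}$. In the constant-curvature case $K_0 = K_1$, the inequality becomes $\coth(\sqrt{-K_0}\,r) > 1$ (or trivially $1/r > 0$ in the flat case), which holds for every $r > 0$. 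The whole argument is elementary, and the only step requiring care is the direction of the Sturm inequality, which the Wronskian computation pins down cleanly.
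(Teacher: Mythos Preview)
Your proof is correct and follows the same route as the paper: reduce to Corollary~\ref{criterion corollary} via the second alternative $\kappa_\gamma > \sqrt{-K_1}$, establish the lower bound $\kappa_\gamma(s) \geq \sqrt{-K_0}\coth(\sqrt{-K_0}\,r)$ (or $1/r$ when $K_0=0$), and invert $\coth$. The only cosmetic difference is that the paper records this lower bound separately as part~(3) of Lemma~\ref{large radius}, proving it via the Riccati equation $\kappa' = -\tilde K - \kappa^2$, whereas you derive it directly by a Wronskian/Sturm comparison on the second-order Jacobi equation; these are equivalent arguments.
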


This corollary follows from the previous corollary, part (3) of Lemma \ref{large radius}, and the fact that the inverse of the hyperbolic cotangent function is
\[
	\operatorname{arccoth}(x) = \frac{1}{2} \log\left( \frac{x + 1}{x - 1} \right).
\]


\subsection{Examples} There are a couple model settings -- the round sphere and the flat torus -- which help to illustrate the necessity of the hypotheses of Theorem \ref{main theorem}. In what follows, $\gamma$ will always be a unit speed curve and $b$ will be a smooth, nonnegative, compactly supported function on $\R$.

\begin{example}[The Sphere]\label{sphere example}
We require our manifold have nonpositive sectional curvature in order to construct $\bfk$ in Definition \ref{def k}, but one may ask anyway if it is possible to impose some conditions on a curve in a positively curved manifold so that we obtain some decay like \eqref{main corollary bound}. We are able to give a negative answer to this question by considering the standard sphere $S^2$. The key ideas here are Zelditch's Kuznecov formula \eqref{zelditch kuz} and the fact that all of the eigenfunctions $\lambda$ are of the form
\[
	\lambda = \sqrt{k(k+1)} \qquad \text{ for some } k = 0,1,2,\ldots.
\]
(See ~\cite[Section 3.4]{Hang} or ~\cite[Theorem 3.1]{helgason}.)

Let $e_j$ be an orthonormal basis of eigenfunctions on $S^2$. For each distinct eigenvalue $\lambda$, construct a new eigenfunction $e_\lambda$ by
\[
	e_\lambda = \frac{\displaystyle \sum_{\lambda_j = \lambda} \left( \int b(s) e_j(\gamma(s)) \, ds \right) \overline{e_j} }{\displaystyle \left( \sum_{\lambda_j = \lambda} \left| \int b(s) e_j(\gamma(s)) \, ds \right|^2 \right)^{1/2}}.
\]
Note that $e_\lambda$ is $L^2$-normalized and that
\[
	\int b(s) e_\lambda(\gamma(s)) \, ds = \left( \sum_{\lambda_j = \lambda} \left| \int b(s) e_j(\gamma(s)) \, ds \right|^2 \right)^{1/2}.
\]
By \eqref{zelditch kuz} and the assumption that $b$ is nonnegative, there exists some large constant $c$ so that
\[
	\sum_{\lambda_j \in [\lambda,\lambda+c]} \left| \int b(s) e_j(\gamma(s)) \, ds \right|^2 \geq 1.
\]
However, there are at most $c$ distinct eigenvalues in the interval $[\lambda, \lambda + c]$. Hence, every interval of length $c$ has an eigenvalue $\lambda$ for which
\[
	\int b(s) e_\lambda(\gamma(s)) \, ds \geq 1/c.
\]
Hence, there is no version of Theorem \ref{main corollary} which will hold on the sphere.
\end{example}

\begin{example}[The Torus] \label{torus example} Let $\T^2 = \R^2/2\pi \Z^2$ denote the flat torus. As noted before, $\bfk \equiv 0$ and every curve $\gamma$ with nonvanishing geodesic curvature satisfies the hypotheses \eqref{curvature hypotheses} of Theorem \ref{main theorem}. Suppose $\gamma$ is one such curve parametrized by arc-length. Let $e_\lambda$ be any $L^2$-normalized eigenfunction on $\T^2$. Note
\[
	e_\lambda(x) = \sum_{\substack{m \in \Z^2 \\ |m| = \lambda}} a_m e^{im\cdot x} 
\]
for some coefficients $a_m$ satisfying
\[
	\sum_{\substack{m \in \Z^2 \\ |m| = \lambda}} |a_m|^2 = \frac{1}{(2\pi)^2}.
\]
By Cauchy-Schwartz,
\begin{align*}
	\left| \int b(s) e_\lambda(\gamma(s)) \, ds \right| &= \left| \sum_{\substack{m \in \Z^2 \\ |m| = \lambda}} a_m \int b(s) e^{im\cdot \gamma(s)} \, ds \right| \\
	&\leq \frac{1}{2\pi} \#\{ m \in \Z^2 : |m| = \lambda\}^{1/2} \sup_{|m| = \lambda} \left| \int b(s) e^{im\cdot \gamma(s)} \, ds \right|.
\end{align*}
Divisor bounds for Gaussian integers yields an essentially sharp estimate
\[
	\#\{ m \in \Z^2 : |m| = \lambda \} = O(\lambda^\epsilon)
\]
for all $\epsilon > 0$, while a standard stationary phase argument yields a sharp uniform bound of $O(\lambda^{-1/2})$ on the integral. Hence,
\[
	\int b(s) e_\lambda(\gamma(s)) \, ds = O(\lambda^{-1/2 + \epsilon}),
\]
which is much better than the bound \eqref{main corollary bound} in Theorem \ref{main corollary}. This example can be seen to be sharp by taking $\gamma$ to be a circle, $b \equiv 1$, and $a_m$ constant over $|m| = \lambda$.

It is worth remarking that, again using standard stationary phase arguments, we obtain a polynomial improvement over the bound \eqref{main corollary bound} even if the geodesic curvature of $\gamma$ were to vanish of finite order. However, problems occur when $\gamma$ contains a line segment. If $\gamma$ \emph{is} a line segment in $\T^2$, we may construct a sequence of exponentials which are essentially constant on $\gamma$. We select a sequence $m_k$ on the integer lattice with $|m_k| \to \infty$ whose distance $|m_k \cdot  \gamma'(s)|$ from the space of normal vectors to $\gamma$ in $\R^2$ vanishes in the limit. Then,
\[
	\left| \int b(s) e^{im_k \cdot \gamma(s)} \, ds \right| \longrightarrow \left| \int b(s) \, ds \right|,
\]
and the conclusion of Theorem \ref{main corollary} does not hold.
\end{example}

In the example for the torus above, we demonstrate the bound
\begin{equation} \label{final intro int}
	\int b(s) e_\lambda(\gamma(s)) \, ds = O(1)
\end{equation}
cannot be improved if $\gamma$ is a geodesic segment. The analogous situation on a compact hyperbolic surface is when $\gamma$ is a segment of a horocycle. It is natural to ask if this bound is still sharp. The answer is: probably not. Assuming the quantum unique ergodicity conjecture, any sequence of eigenfunctions on a compact hyperbolic surface is quantum ergodic. Recently, Canzani, Galkowski, and Toth ~\cite{CGT} showed that if $e_\lambda$ is a quantum ergodic sequence of eigenfunctions, the integral in \eqref{final intro int} is necessarily $o(1)$ for all smooth curves $\gamma$. On the other hand, if one can construct a sequence of eigenfunctions saturating \eqref{final intro int} for some horocycle $\gamma$, one will have provided a negative answer to the quantum unique ergodicity conjecture.

\vspace{0.6em}
\noindent \textbf{Acknowledgements.} The author would like to thank his advisor, Christopher Sogge, for providing the problem, relevant materials, and support. Thanks as well to Yakun Xi and Cheng Zhang, whose work this paper models. This work is supported in part by NSF grant DMS-1069175.


\section{Standard reduction and lift to the universal cover} \label{STANDARD REDUCTION}

We employ a standard strategy to reduce the bound in Theorem \ref{main theorem} to one involving a sum over the deck transformations in the universal cover of oscillatory integrals with some geometric phase function. Our presentation of this reduction is only superficially different from ~\cite{CSper} and ~\cite{Gauss}. The idea to lift the problem to the universal cover originally appeared in B\'erard's celebrated paper ~\cite{Berard} in which he obtained a log improvement to the bound on the remainder term of the sharp Weyl law for compact manifolds with nonpositive sectional curvature. Other elements of this reduction, including uniformizing the sum in Theorem \ref{main theorem} over a Schwartz-class function and writing the result as some integral against the kernel of the half-wave operator, are very standard and appear in many results in global harmonic analysis (see for example ~\cite{BGT,SZDuke,Berard,CSper,Gauss} and ~\cite{SFIO,Hang} for further reading).

Let $e_j$ for $j = 0,1,2, \ldots$ be a Hilbert basis of eigenfunctions with corresponding eigenvalues $\lambda_j$. To prove Theorem \ref{main theorem}, it suffices to show
\begin{equation} \label{reduction 1}
	\sum_j \chi(T(\lambda_j - \lambda)) \left| \int b(s) e_j(\gamma(s)) \, ds \right|^2 \lesssim T^{-1} + e^{CT} \lambda^{-1/2}
\end{equation}
where $\chi$ is a nonnegative, Schwartz-class function on $\R$ with $\chi(0) = 1$ and $\supp \hat \chi \subset [-1,1]$, and where
\begin{equation} \label{T = clog}
	T = c \log \lambda
\end{equation}
for some appropriately small positive constant $c$. Using a partition of unity, we write $b$ as a sum
\[
	b = \sum_k b_k
\]
of smooth, compactly supported functions with small support. By Cauchy-Schwarz, the left hand side of \eqref{reduction 1} is bounded by the number of $b_k$ in the sum times
\[
	\sum_k \sum_j \chi(T(\lambda_j - \lambda)) \left| \int b_k(s) e_j(\gamma(s)) \, ds \right|^2.
\]
Hence, it suffices to prove \eqref{reduction 1} where $b$ has arbitrarily small support. After expanding the integral in \eqref{reduction 1}, the left hand side of \eqref{reduction 1} is
\begin{align*}
	&\sum_j \chi(T(\lambda_j - \lambda)) \iint b(s) b(t) e_j(\gamma(s)) \overline{e_j(\gamma(t))} \, ds \, dt\\
	&= \frac{1}{2\pi} \sum_j \iiint b(s) b(t) \hat \chi(\tau) e^{i\tau T(\lambda_j - \lambda)} e_j(\gamma(s)) \overline{e_j(\gamma(t))} \, ds \, dt \, d\tau\\
	&= \frac{1}{2\pi T} \sum_j \iiint b(s) b(t) \hat \chi(\tau/T) e^{i\tau (\lambda_j - \lambda)} e_j(\gamma(s)) \overline{e_j(\gamma(t))} \, ds \, dt \, d\tau\\
	&= \frac{1}{2\pi T} \iiint b(s) b(t) \hat \chi(\tau/T) e^{-i\tau \lambda} e^{i\tau \lap g}(\gamma(s), \gamma(t)) \, d\tau \, ds \, dt
\end{align*}
where the second line follows from the Fourier inversion formula, the third by a change of variables, and the fourth by writing out the kernel
\[
	e^{i\tau \lap g}(x,y) = \sum_j e^{i\tau \lambda_j} e_j(x) \overline{e_j(y)}
\]
of the half-wave operator $e^{it\lap g}$. Hence, \eqref{reduction 1} will follow from
\[
    \left| \iiint b(s)b(t) \hat \chi(\tau/T) e^{-i\lambda \tau} e^{i\tau \lap g}(\gamma(s), \gamma(t)) \, d\tau \, ds \, dt \right| \lesssim 1 + e^{CT} \lambda^{-1/2}.
\]
To simplify things, we scale the metric so that the injectivity radius of $M$ is at least $10$. Moreover, after perhaps restricting the support of $b$ using a partition of unity, we ensure that
\begin{equation} \label{support of b}
    \sup_{s,t \in \supp b} d_g(\gamma(s), \gamma(t)) \leq 1
\end{equation}
where $d_g$ is the distance function with respect to the metric $g$. Now we let $\beta \in C_0^\infty(\R)$ be a smooth cutoff function so that $\beta(\tau) = 1$ for $|\tau| \leq 2$ and $\beta(\tau) = 0$ for $|\tau| \geq 4$. We begin by proving the bound
\begin{equation} \label{local bound}
    \left| \iiint b(s)b(t) \beta(\tau) \hat \chi(\tau/T) e^{-i\lambda \tau} e^{i\tau \lap g}(\gamma(s), \gamma(t)) \, ds \, dt \, d\tau \right| \lesssim 1.
\end{equation}
As argued in ~\cite{CSper} and ~\cite{Gauss}, we have by the proof of Lemma 5.1.3 in ~\cite{SFIO}
\[
    \int \beta(\tau) \hat \chi(\tau/T) e^{-i\lambda \tau} e^{i\tau \lap g} (x,y) \, d\tau = \lambda^{1/2} \sum_\pm a_\pm(\lambda; d_g(x, y)) e^{\pm i \lambda d_g(x,y)} + O(1)
\]
where the amplitude $a_\pm$ satisfies bounds
\begin{equation} \label{local a bound 1}
    \left| \frac{d^j}{dr^j} a_\pm(\lambda; r) \right| \leq C_j r^{-j-1/2} \qquad \text{ if } r \geq \lambda^{-1}
\end{equation}
and
\begin{equation} \label{local a bound 2}
    |a_\pm(\lambda; r)| \leq C\lambda^{1/2} \qquad \text{ if } r \leq \lambda^{-1}.
\end{equation}
Hence \eqref{local bound} would follow from
\begin{equation} \label{local bound 2}
    \sum_\pm \left| \iint b(s)b(t) a_\pm(\lambda; d_g(\gamma(s), \gamma(t))) e^{\pm i \lambda d_g(\gamma(s), \gamma(t))} \, ds \, dt \right| \lesssim \lambda^{-1/2}.
\end{equation}
We let
\[
    r(s,t) = \sgn(s - t) d_g(\gamma(s), \gamma(t))
\]
and if necessary restrict the support of $b$ so that the map
\[
    (s,t) \mapsto (s,r(s,t))
\]
is a diffeomorphism on $\supp b \times \supp b$. By a change of coordinates, we write the integral in \eqref{local bound 2} as
\begin{align*} 
    \iint \tilde b(s,r) a_\pm(\lambda; |r|) &e^{\pm i \lambda |r|} \, ds \, dr\\
    &= \iint_{|r| \leq \lambda^{-1}} + \iint_{|r| \geq \lambda^{-1}} \tilde b(s,r) a_\pm(\lambda; |r|) e^{\pm i \lambda |r|} \, ds \, dr. 
\end{align*}
where $\tilde b(s,r) \, ds \, dr = b(s)b(t) \, ds \, dt$ where $r = r(s,t)$. The $|r| \leq \lambda^{-1}$ part is $O(\lambda^{-1/2})$ by \eqref{local a bound 2}. The same bound holds for the $|r| \geq \lambda^{-1}$ part after integrating by parts in $r$ once and applying \eqref{local a bound 1}. Hence, we have \eqref{local bound}.

What remains is to prove
\begin{align} 
    \nonumber \left| \iiint b(s)b(t)(1 - \beta(\tau)) \hat \chi(\tau/T) e^{-i\lambda \tau} e^{i\tau \lap g}(\gamma(s), \gamma(t)) \, ds \, dt \, d\tau \right|&\\
    \label{global bound} &\hspace{-8em}\lesssim 1 + e^{CT} \lambda^{-1/2}.
\end{align}
As in ~\cite{CSper} and ~\cite{Gauss}, we want to lift the kernel $e^{i \tau \lap g}(x,y)$ to the universal cover of $M$, but we also want to make use of Huygen's principle afterwards. So before we lift, we will rephrase the bound above using the kernel $\cos(\tau \lap g)$ instead of $e^{i \tau \lap g}$. By Euler's formula,
\[
    e^{i \tau \lap g} = 2 \cos(\tau \lap g) - e^{-i\tau \lap g},
\]
the contribution of the second term on the right side to \eqref{global bound} is
\begin{align*}
    \iiint &b(s) b(t) (1 - \beta(\tau)) \hat \chi(\tau/T) e^{-i\lambda \tau} e^{-i \tau \lap g}(\gamma(s),\gamma(t)) \, d\tau \, ds \, dt\\
    &= \sum_j \iiint (1 - \beta(\tau)) \hat \chi(\tau/T) e^{-i(\lambda_j + \lambda) \tau} e_j(\gamma(s)) \overline{e_j(\gamma(t))} \, d\tau \, ds \, dt \\
    &= 2\pi \sum_j \Phi_T(\lambda_j + \lambda) \iint b(s) b(t) e_j(\gamma(s)) \overline{e_j(\gamma(t))} \, ds \, dt
\end{align*}
where $\hat \Phi_T(\tau) = (1 - \beta(\tau)) \hat \chi(\tau/T)$. By integration by parts,
\[
    \Phi_T(\lambda) = O(\lambda^{-N}) \qquad \text{ for all } N \in \N.
\]
Hence,
\begin{align*}
    2\pi &\left| \sum_j \Phi_T(\lambda_j + \lambda) \iint b(s) b(t) e_j(\gamma(s)) \overline{e_j(\gamma(t))} \, ds \, dt \right|\\
    &\leq 2\pi \sum_j |\Phi_T(\lambda_j + \lambda)|\left| \int b(s) e_j(\gamma(s)) \, ds \right|^2\\
    &= O(\lambda^{-N}),
\end{align*}
where in the last line we invoke the standard $O(1)$ bound on the integral.
Hence, it suffices to show
\begin{align} \label{global bound 2}
    \nonumber \left| \iiint b(s)b(t) (1 - \beta(\tau)) \hat \chi(\tau/T) e^{-i\lambda \tau} \cos(\tau \lap g)(\gamma(s), \gamma(t)) \, d\tau \, ds \, dt  \right|& \\
&\hspace{-4em} \leq 1 + e^{CT} \lambda^{-1/2}.
\end{align}
Now we are ready to lift to the universal cover. Since $M$ has nonpositive sectional curvature, we identify its universal cover with $(\R^2, \tilde g)$ where $\tilde g$ is the pullback of the metric tensor $g$ through the covering map. Fix $\tilde f \in C_0^\infty(\R^2)$ and define $f \in C^\infty(M)$ by
\[
    f(x) = \sum_{\alpha \in \Gamma} \tilde f \circ \alpha(\tilde x)
\]
where $\Gamma$ is the set of deck transformation associated with the covering map and $\tilde x$ is a lift of $x$ to $\R^2$. Now if $\tilde u(t,x)$ satisfies the wave equation
\[
    \partial_t^2 \tilde u - \Delta_{\tilde g} \tilde u = 0
\]
with initial data 
\begin{align*}
    \tilde u(0,\tilde x) &= \tilde f(\tilde x) \quad \text{ and }\\
    \partial_t \tilde u(0,\tilde x) &= 0,
\end{align*}
then
\[
    u(t,x) = \sum_{\alpha \in \Gamma} \tilde u(t, \alpha(\tilde x))
\]
satisfies the wave equation in $M$ with respect to the metric $g$ with initial data
\begin{align*}
    u(0,x) &= f(x) \\
    \partial_t u(0,x) &= 0.
\end{align*}
Hence,
\[
    \cos(\tau \lap{g}) f(x) = \sum_{\alpha \in \Gamma} \cos(\tau \lap{\tilde g}) \tilde f(\alpha(\tilde x)),
\]
from which we obtain the relationship
\[
    \cos(\tau \lap g)(x,y) = \sum_{\alpha \in \Gamma} \cos(\tau \lap{\tilde g})(\alpha(\tilde x),\tilde y)
\]
between the kernels. Using this identity, we will have the bound \eqref{global bound 2} if we can show
\begin{equation}\label{lifted bound 1}
    \sum_{\alpha \in \Gamma} \left| \iint b(s)b(t) K_{T,\lambda}(\tilde \gamma_\alpha(s), \tilde \gamma(t)) \, ds \, dt \right| \lesssim 1 + e^{CT} \lambda^{-1/2}
\end{equation}
where $\tilde \gamma$ is a lift of $\gamma$ to the universal cover, $\tgamma_\alpha$ is shorthand for $\alpha \circ \tgamma$, and
\[
    K_{T,\lambda}(x, y) = \int (1 - \beta(\tau)) \hat \chi(\tau/T) e^{-i\lambda \tau} \cos(\tau \lap{\tilde g})(x,y) \, d\tau.
\]
Proposition 5.1\footnote{Sogge, Xi, and Zhang \cite{Gauss} prove this proposition using the Hadamard parametrix as it appears in B\'erard ~\cite{Berard}.} in ~\cite{Gauss} stated below, gives a crucial characterization of the kernel $K_{T,\lambda}$.

\begin{proposition}[Sogge, Xi, Zhang] \label{kernel bounds}
    If $d_{\tilde g}(x,y) \geq 1$ and $\lambda \gg 1$ we can write
    \[
        K_{T,\lambda}(x,y) = \lambda^{1/2} \sum_\pm a_\pm(T,\lambda; x,y) e^{\pm i \lambda d_{\tilde g}(x,y)} + R_{T,\lambda}(x,y),
    \]
    where
    \begin{equation} \label{KB a bounded}
        |a_\pm(T,\lambda;x,y)| \leq C,
    \end{equation}
    and if $N = 1,2,\ldots$ is fixed
        \begin{equation} \label{KB a bound x}
        \Delta_x^N a_\pm(T,\lambda; x,y) = O(e^{C_N d_{\tilde g}(x,y)})
    \end{equation}
    and
    \begin{equation} \label{KB a bound y}
        \Delta_y^N a_\pm(T,\lambda; x,y) = O(e^{C_N d_{\tilde g}(x,y)}),
    \end{equation}
    where $\Delta_x$ and $\Delta_y$ denote the operator $\Delta_{\tilde g}$ acting in the $x$ and $y$ variables, respectively. Additionally if the constant $c > 0$ in \eqref{T = clog} is small enough, 
    \begin{equation} \label{KB remainder bound}
        R_{T,\lambda}(x,y) = O(\lambda^{-1}) \qquad \text{ if } d_{\tilde g}(x,y) \geq 1
    \end{equation}
    and
    \begin{equation} \label{KB local bound}
        K_{T,\lambda}(x,y) = O(\lambda^{-1}) \qquad \text{ if } d_{\tilde g}(x,y) \leq 1.
    \end{equation}
\end{proposition}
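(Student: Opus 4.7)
The plan is to build the Hadamard parametrix for $\cos(\tau \lap{\tilde g})$ on the Cartan-Hadamard cover $(\R^2, \tilde g)$, substitute it into the definition of $K_{T,\lambda}$, and reduce to an oscillatory integral whose stationary phase analysis produces the main term $\lambda^{1/2} a_\pm e^{\pm i \lambda r}$ with $r = d_{\tilde g}(x,y)$. Because $M$ has nonpositive sectional curvature and $(\R^2, \tilde g)$ is simply connected, the exponential map is a global diffeomorphism, so $r$ is smooth off the diagonal and the geodesic from $y$ to $x$ is unique. Following B\'erard, one constructs, for any fixed $N$, smooth coefficients $w_\nu(x,y)$ determined by transport equations along this geodesic, with $w_0$ a power of the Jacobian density $\Theta(x,y)$ of the exponential map, such that modulo a remainder $R_N$ whose spatial derivatives are controlled by $e^{CT}\lambda^{-N}$ on $|\tau| \leq T$,
\[
\cos(\tau \lap{\tilde g})(x,y) = \sum_{\nu=0}^{N} w_\nu(x,y) \, E_\nu(\tau, r),
\]
where each $E_\nu(\tau, r)$ is an explicit kernel built from $\cos(\tau|\xi|)$ on $\R^2$.

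Substituting this parametrix into $K_{T,\lambda}$ and using the Fourier representation of the $E_\nu$, the $\tau$-integration reduces to evaluating
\[
\int (1 - \beta(\tau)) \hat\chi(\tau/T) e^{-i\lambda\tau} \cos(\tau|\xi|) \, d\tau,
\]
which by Fourier inversion and the supports of $\beta$ and $\hat\chi$ equals $\pi T \bigl[\chi(T(\lambda - |\xi|)) + \chi(T(\lambda+|\xi|))\bigr]$ up to errors rapidly decreasing in $\lambda$. This localizes the remaining $\xi$-integral to the annulus $\bigl||\xi| - \lambda\bigr| = O(T^{-1})$. Passing to polar coordinates $\xi = \rho \omega$ and applying stationary phase in the angular variable $\omega \in S^1$, with phase $\rho\langle \omega, \exp_y^{-1}(x)\rangle$ whose large parameter is $\lambda r \geq \lambda$, extracts the prefactor $\lambda^{1/2}$, the phase $e^{\pm i \lambda r}$, and a smooth amplitude $a_\pm(T,\lambda;x,y)$ with leading order proportional to $w_0(x,y)$.

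The amplitude bound \eqref{KB a bounded} then follows from $\Theta(x,y) \geq 1$ on a nonpositively curved surface, a consequence of Rauch comparison with Euclidean Jacobi fields, so $w_0$ is uniformly bounded. The Laplacian bounds \eqref{KB a bound x} and \eqref{KB a bound y} come from differentiating the transport-determined $w_\nu$: each derivative involves Jacobi field derivatives along the geodesic from $y$ to $x$, and these grow at most exponentially in $r$ on a nonpositively curved manifold, yielding the factor $e^{C_N r}$. The remainder bound \eqref{KB remainder bound} combines the parametrix error $R_N$, which contributes $e^{CT}\lambda^{-N}$ and is made $O(\lambda^{-1})$ by choosing $N$ large and $c$ in $T = c\log\lambda$ small relative to the Jacobi growth rate, together with the stationary phase error, which gains $\lambda^{-1/2}$ over the main term. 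The local estimate \eqref{KB local bound} for $r \leq 1$ is handled separately using the same parametrix: the cutoff $1 - \beta(\tau)$ pushes $|\tau|$ away from the singular support of the wave kernel in $\tau$, so repeated integration by parts in the joint $(\tau, \xi)$ variables produces the desired $O(\lambda^{-1})$ without any stationary phase gain being needed.

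The main technical obstacle is precisely the balance in \eqref{KB remainder bound}: the exponential growth $e^{CT}$ inherited from Jacobi field derivatives on the nonpositively curved cover competes with the $\lambda^{-N}$ gain from iterating the parametrix, and the restriction $T = c\log\lambda$ with $c$ small is exactly what lets them cancel to produce $O(\lambda^{-1})$. This is the quantitative input of the nonpositive curvature hypothesis; under positive curvature, conjugate points on the cover would destroy the global smoothness of $r$ and the parametrix construction would fail on large scales.
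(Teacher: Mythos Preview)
The paper does not prove this proposition; it is quoted as Proposition~5.1 of \cite{Gauss}, with a footnote noting that Sogge, Xi, and Zhang establish it via the Hadamard parametrix as in B\'erard~\cite{Berard}. Your sketch follows exactly that route and captures the essential points correctly.
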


By \eqref{KB local bound} , the contribution of the identity term to the sum in \eqref{lifted bound 1} is $O(\lambda^{-1})$, better than we need. Hence we need only check that
\begin{equation} \label{lifted bound 2}
    \sum_{\alpha \in \Gamma \setminus I} \left| \iint b(s)b(t) K_{T,\lambda}(\tilde \gamma_\alpha(s), \tilde \gamma(t)) \, ds \, dt \right| \lesssim 1 + e^{CT} \lambda^{-1/2}.
\end{equation}
where $I$ denotes the identity element in $\Gamma$. By Huygen's principle and since $\supp \hat \chi \subset [-1,1]$, the kernel $K_{T,\lambda}$ is supported on $d_{\tilde g}(x,y) \leq T$, and so the sum in \eqref{lifted bound 2} is finite. In fact, by volume comparison with the plane of constant curvature, there are $O(e^{CT})$ many terms in the sum. Since the injectivity radius of $M$ is at least $10$, \eqref{support of b} implies $d_{\tilde g}(\tilde \gamma_\alpha(s), \tilde \gamma(t)) \geq 8$ for $s,t \in \supp b$ if $\alpha \neq I$. Hence by \eqref{KB remainder bound},
\[
    \sum_{\alpha \neq I} \left| \iint b(s)b(t) R_{T,\lambda}(\tilde \gamma_\alpha(s), \tilde \gamma(t)) \, ds \, dt \right| \lesssim \lambda^{-1} e^{CT},
\]
again satisfying better bounds than required. Hence, it suffices to show
\begin{equation} \label{lifted bound 3}
    \lambda^{1/2} \sum_{\alpha \neq I} \left| \iint b(s)b(t) a_\pm(T,\lambda; \tilde \gamma_\alpha(s), \tilde \gamma(t)) e^{\pm i \lambda \phi_\alpha(s,t)} \, ds \, dt \right| \lesssim 1 + e^{CT} \lambda^{-1/2}
\end{equation}
where $\phi_\alpha(s,t) = d_{\tilde g}(\tilde \gamma_\alpha(s), \tilde \gamma(t))$.

We approach the bound \eqref{lifted bound 3} by splitting the sum into two parts. After perhaps smoothly extending $\gamma$ past its endpoints, the continuity of $\bfk$ allows us to select an open interval $\mathcal I$ containing $\supp b$ on which the curvature hypotheses \eqref{curvature hypotheses} are satisfied. We fix a constant $R$ independent of $T$ and $\lambda$ to be determined later, and let
\begin{equation} \label{def A}
    A = \{\alpha \in \Gamma : \inf_{s,t \in \mathcal I} d_{\tilde g}(\tilde \gamma_\alpha(s), \tilde \gamma(t)) \leq R \}.
\end{equation}
We will have the bound \eqref{lifted bound 3} if we can prove the following respective medium- and large- time bounds.

\begin{proposition}\label{medium time}
For any $\alpha \in \Gamma \setminus I$, there exists a constant $C_\alpha$ such that
\[
    \left| \iint b(s)b(t) a_\pm(T, \lambda; \tilde \gamma_\alpha(s), \tilde \gamma(t)) e^{\pm i \lambda \phi_\alpha(s,t)} \, ds \, dt \right| \leq C_\alpha \lambda^{-1/2}
\]
for $\lambda > 1$.
\end{proposition}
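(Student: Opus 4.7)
The plan is to apply the method of stationary phase to the oscillatory integral, exploiting the fact that $\alpha \neq I$ is fixed. Since $d_{\tilde g}(\tilde \gamma_\alpha(s),\tilde \gamma(t))$ is bounded by an $\alpha$-dependent constant on the compact set $\supp b \times \supp b$, the bounds \eqref{KB a bounded}--\eqref{KB a bound y} (combined with elliptic regularity for $\Delta_{\tilde g}$ to pass from iterated Laplacian control to mixed-derivative control) furnish uniform $C^\infty$ control of the amplitude in $T$ and $\lambda$ by a constant depending only on $\alpha$. The phase $\phi_\alpha(s,t) = d_{\tilde g}(\tilde \gamma_\alpha(s), \tilde \gamma(t))$ is likewise smooth and $\alpha$-uniformly bounded.

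The strategy is one-dimensional stationary phase in $t$, applied for each fixed $s$, followed by integration in $s$. A critical point of $t \mapsto \phi_\alpha(s,t)$ occurs at $t_0$ where $\tilde \gamma'(t_0)$ is perpendicular to the minimizing geodesic from $\tilde \gamma_\alpha(s)$ to $\tilde \gamma(t_0)$. At such a point, the second variation of arclength (computed via a Jacobi field along the connecting geodesic, exactly in the spirit of Definition \ref{def k}) gives $\partial_t^2 \phi_\alpha(s,t_0)$ as the signed difference between $\kappa_\gamma(t_0)$ and the geodesic curvature at $\tilde \gamma(t_0)$ of the distance sphere of radius $\phi_\alpha(s,t_0)$ centered at $\tilde \gamma_\alpha(s)$. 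Provided this difference is nonzero, standard one-dimensional stationary phase yields $O(\lambda^{-1/2})$ uniformly in $s$; away from critical points, integration by parts in $t$ produces $O(\lambda^{-N})$ for any $N$. Integrating in $s$ against the smooth cutoff $b(s)$ then delivers the claimed bound.

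The main obstacle is possible degeneracy of $\partial_t^2 \phi_\alpha$ at some critical point. By Lemma \ref{large radius}, in nonpositive curvature the sphere curvature strictly exceeds $\bfk$ at any finite radius, so the curvature hypothesis \eqref{curvature hypotheses} automatically forces nondegeneracy whenever $\kappa_\gamma(t_0) < \bfk(\pm\gamma'^\perp(t_0))$. In the complementary case $\kappa_\gamma(t_0) > \bfk$, $\partial_t^2 \phi_\alpha$ can vanish only at those isolated critical points where $\phi_\alpha(s,t_0)$ equals the unique radius at which the sphere curvature matches $\kappa_\gamma(t_0)$; at such exceptional points I exchange roles and apply one-dimensional stationary phase in $s$ using the symmetric identity for $\partial_s^2 \phi_\alpha$. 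Any residual critical points where both second derivatives vanish form a lower-dimensional set whose contribution, for the fixed $\alpha$ under consideration, is absorbed into the $\alpha$-dependent constant $C_\alpha$ via a standard van der Corput-type higher-order stationary phase estimate.
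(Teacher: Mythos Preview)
Your proposal has a genuine gap. The heart of the matter is what happens at a critical point $(s_0,t_0)$ of $\phi_\alpha$ where both $\partial_s^2\phi_\alpha$ and $\partial_t^2\phi_\alpha$ vanish. You wave this away as a ``lower-dimensional set whose contribution \ldots\ is absorbed into $C_\alpha$ via a standard van der Corput-type higher-order stationary phase estimate,'' but that is not an argument: you have not established that any higher-order derivative of $\phi_\alpha$ is nonvanishing at such a point, so there is no van der Corput lemma to invoke. ``Lower-dimensional'' does not help either---the stationary-phase contribution of an isolated degenerate critical point can be arbitrarily bad if the phase is flat enough there. Your switching manoeuvre (from $t$ to $s$) likewise offers no protection, since the symmetric formula for $\partial_s^2\phi_\alpha$ involves $\kappa_\gamma(s)$ and a \emph{different} sphere curvature, and nothing prevents both from matching simultaneously.

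The paper's proof closes this hole by a short geometric observation you overlooked. Because $\alpha$ is fixed and the reduction in Section~\ref{STANDARD REDUCTION} permits shrinking $\supp b$, it suffices to analyze the situation on the diagonal $\{s=t\}$. There, if $\nabla\phi_\alpha(s_0,s_0)=0$ and $\partial_s^2\phi_\alpha(s_0,s_0)=0$, formula~\eqref{pure second} forces $\tilde\gamma_\alpha$ to curve \emph{towards} $\tilde\gamma(s_0)$ along the common normal geodesic; but then $\tilde\gamma$ cannot also curve towards $\tilde\gamma_\alpha(s_0)$, since by symmetry the midpoint of that geodesic would be a fixed point of the deck transformation $\alpha\neq I$. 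Hence $\partial_t^2\phi_\alpha(s_0,s_0)\neq 0$, and the diagonal is covered by the three open sets $\{\partial_s^2\phi_\alpha\neq 0\}$, $\{\partial_t^2\phi_\alpha\neq 0\}$, $\{\nabla\phi_\alpha\neq 0\}$. Shrinking $\supp b$ so that $\supp b\times\supp b$ lies in one of them, a single one-dimensional stationary (or nonstationary) phase finishes the job. Note in particular that the curvature hypothesis~\eqref{curvature hypotheses} plays no role in this proposition; it enters only in Proposition~\ref{large time}, where $\phi_\alpha$ is large enough that Lemma~\ref{large radius}(1) forces the sphere curvature close to $\bfk$.
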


\begin{proposition} \label{large time}
There exists a constant $C$ independent of $T$ and $\lambda$ such that for every $\alpha \in \Gamma \setminus A$ with $\inf_{s,t \in \mathcal I} \phi_\alpha(s,t) \leq T$,
\[
    \left| \iint b(s)b(t) a_\pm(T, \lambda; \tilde \gamma_\alpha(s), \tilde \gamma(t)) e^{\pm i \lambda \phi_\alpha(s,t)} \, ds \, dt \right| \leq C e^{CT} \lambda^{-1}
\]
\end{proposition}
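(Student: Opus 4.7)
The plan is to establish the bound via two-dimensional stationary phase applied to
\[
    I_\alpha(\lambda) = \iint b(s)b(t)\, a_\pm(T,\lambda;\tgamma_\alpha(s),\tgamma(t))\, e^{\pm i\lambda \phi_\alpha(s,t)}\, ds\, dt.
\]
A non-degenerate critical point in a two-dimensional oscillatory integral produces $\lambda^{-1}$ decay, and the factor $e^{CT}$ will absorb the growth of the amplitude derivatives coming from \eqref{KB a bound x}--\eqref{KB a bound y}, exploiting the fact that $d_{\tilde g}(\tgamma_\alpha(s),\tgamma(t)) \leq T$ on the relevant support by Huygens' principle and $\supp \hat\chi \subset [-1,1]$. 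All constants must be uniform in $\alpha \in \Gamma \setminus A$, which is the role of the threshold $R$ in the definition \eqref{def A}: I plan to choose $R$ sufficiently large (but fixed, independent of $T$ and $\lambda$) so that the Hessian lower bound kicks in uniformly.

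By the first variation of arclength, a critical point $(s_0,t_0) \in \mathcal I \times \mathcal I$ of $\phi_\alpha$ corresponds to a configuration in which the unique minimizing geodesic $\zeta$ from $\tgamma_\alpha(s_0)$ to $\tgamma(t_0)$ meets both curves perpendicularly. Strict convexity of the distance function in a Hadamard manifold ensures at most one such critical point per $\alpha$, and away from it $|\nabla \phi_\alpha|$ is bounded below, so a standard integration-by-parts argument yields decay better than required. At a critical point, writing $r = \phi_\alpha(s_0,t_0) \geq R$, the second variation of arclength gives
\[
    \partial_s^2 \phi_\alpha\big|_{(s_0,t_0)} = \pm\kappa_\gamma(s_0) + \frac{\langle (D/dr) J(r), J(r) \rangle}{|J(r)|^2},
\]
where $J$ is the perpendicular Jacobi field along $\zeta$ with $J(0) = 0$ and $J(r) = \tgamma_\alpha'^\perp(s_0)$, the sign being determined by which side of $\tgamma_\alpha$ the geodesic $\zeta$ emerges on. The Jacobi field comparison underlying Proposition \ref{def k good} (equivalently \cite[Proposition 4.1]{emmett1}) identifies the limit of the fraction as $r \to \infty$ with $-\bfk$ evaluated at a signed tangent direction, whence
\[
    \partial_s^2 \phi_\alpha\big|_{(s_0,t_0)} = \pm\bigl(\kappa_\gamma(s_0) - \bfk(\pm \tgamma'^\perp(s_0))\bigr) + o(1)
\]
as $r \to \infty$, uniformly in base direction. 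The hypothesis \eqref{curvature hypotheses}, combined with the continuity of $\bfk$ and compactness of $\overline{\mathcal I}$, then yields a uniform lower bound on the magnitude once $R$ is large enough. A symmetric analysis controls $\partial_t^2 \phi_\alpha$. The mixed partial $\partial_s\partial_t\phi_\alpha$ involves a variation Jacobi field coupling the two endpoints of $\zeta$; by Rauch comparison against the constant curvature model this decays as $r \to \infty$, so the Hessian determinant is uniformly non-degenerate.

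With the uniform Hessian lower bound in hand, standard two-dimensional stationary phase yields $|I_\alpha(\lambda)| \leq C e^{CT}\lambda^{-1}$. The amplitude is bounded by $C$ via \eqref{KB a bounded}, and its first two derivatives in $s$ and $t$ are controlled by $C e^{CT}$ via \eqref{KB a bound x}--\eqref{KB a bound y} combined with elliptic regularity along the curves, exploiting $d_{\tilde g} \leq T$ throughout. The main obstacle, and the essential step beyond the arguments of \cite{CSper,emmett1,Gauss}, is to make the Hessian lower bound quantitative and uniform in $\alpha$: the convergence $\langle (D/dr)J(r), J(r) \rangle / |J(r)|^2 \to -\bfk$ in the definition of $\bfk$ must be uniform in the base direction $v \in SM$ and must come with a quantitative rate controlled only by the curvature bounds of $M$. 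This reduces to a Rauch comparison argument for Jacobi fields on the universal cover, after which $R$ can be fixed to finish the proof.
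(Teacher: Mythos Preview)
Your outline is essentially the paper's: fix $R$ large enough that the Hessian of $\phi_\alpha$ is uniformly nondegenerate on $\mathcal I\times\mathcal I$---diagonal entries bounded below via \eqref{curvature hypotheses} together with the convergence of circle curvature to $\bfk$, off-diagonal entries small via a Jacobi-field estimate---and then run stationary phase. The paper supplies exactly the two quantitative rates you flag as needed: Lemma~\ref{mixed bound} gives $|\partial_s\partial_t\phi_\alpha|\le 2\phi_\alpha^{-1}$, and Lemma~\ref{large radius}(1) gives $0<\kappa(r)-\tilde\bfk\le r^{-1}$, so $R=16\epsilon^{-1}$ already suffices, and Lemma~\ref{pure second bounds} then packages the diagonal lower bound $|\partial_s^2\phi_\alpha|,|\partial_t^2\phi_\alpha|\ge\epsilon/4$ uniformly in $\alpha$.

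The one genuine gap is your appeal to ``standard two-dimensional stationary phase.'' Off-the-shelf versions require uniform control on third (and often higher) derivatives of the phase, and nothing in your sketch or in the paper's lemmas supplies $C^3$ bounds on $\phi_\alpha$ uniform in $\alpha\in\Gamma\setminus A$. The paper circumvents this by an explicit argument using only the Hessian information already in hand: from $|\partial_s^2\phi_\alpha|,|\partial_t^2\phi_\alpha|\ge\epsilon/4$ and $|\partial_s\partial_t\phi_\alpha|\le\epsilon/8$ one gets $|\nabla\phi_\alpha(s,t)|\ge c'|(s,t)|$ near the (translated) critical point; then one splits at radius $\lambda^{-1/2}$, bounds the inner piece trivially by $O(e^{CT}\lambda^{-1})$, and on the outer piece integrates by parts once with $L=(\pm i\lambda|\nabla\phi_\alpha|^2)^{-1}\nabla\phi_\alpha\cdot\nabla$. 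The resulting integrand is $O(e^{CT}\lambda^{-1}|(s,t)|^{-2})$, giving $O(e^{CT}\lambda^{-1}\log\lambda)$, and the logarithm is absorbed into $e^{CT}$ since $T=c\log\lambda$. Your black-box route would also close if you first proved uniform $C^3$ bounds on $d_{\tilde g}$ for $d_{\tilde g}\ge 1$ (plausible from the bounded geometry of the cover), but that is additional work the paper deliberately avoids.

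Two small corrections. The ratio $\langle(D/dr)J,J\rangle/|J|^2$ for the Jacobi field with $J(0)=0$ is precisely the circle curvature $\kappa(r)$ and tends to $+\bfk$, not $-\bfk$; so at a critical point $\partial_s^2\phi_\alpha=\pm\kappa_\gamma+\bfk+o(1)$. And ``strict convexity of the distance function'' does not by itself yield uniqueness of critical points of $\phi_\alpha$, since $\tgamma$ and $\tgamma_\alpha$ need not be geodesics; uniqueness on $\mathcal I\times\mathcal I$ follows instead from the uniform Hessian nondegeneracy you go on to establish.
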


Since $A$ has a fixed number of terms, Proposition \ref{medium time} implies that the contribution of the set $A$ to the sum to \eqref{lifted bound 3} is $O(1)$. Moreover because $K_{T,\lambda}(x,y)$ vanishes for $d_{\tilde g}(x,y) > T$, we need only consider the terms $\alpha \in \Gamma \setminus A$ such that $\phi_\alpha(s,t) \leq T$ for some $s,t \in \supp b$. As noted earlier, there are $O(e^{CT})$ many such terms, and so Proposition \ref{large time} tells us the contribution of the set $\Gamma \setminus A$ to the sum in \eqref{lifted bound 3} is $O(e^{CT} \lambda^{-1/2})$, as desired. To prove Propositions \ref{medium time} and \ref{large time}, we will use the geometric tools from ~\cite{emmett1} to obtain bounds on the derivatives of $\phi_\alpha$ and then apply stationary phase.


\section{Phase function bounds and proof of Proposition \ref{medium time} } \label{PHASE FUNCTION BOUNDS}

To prove Propositions \ref{medium time} and \ref{large time}, we will need bounds on the derivatives of the phase function $\phi_\alpha$ for $\alpha \neq I$. First, we bound the mixed partial derivative $\partial_s \partial_t \phi_\alpha$, and second compute $\partial_s^2 \phi_\alpha$ and $\partial_t^2 \phi_\alpha$ in terms of the curvature $\kappa_\gamma$ of $\gamma$ and the curvature of circles. Following this, we prove Proposition \ref{medium time}. ~\cite{doCarmo} is the principal reference for the geometric arguments which follow.

Let $F : \I \times \I \times \R \to \R^2$ be the smooth map to the universal cover defined so that $r \mapsto F(s,t,r)$ is the constant-speed geodesic with $F(s,t,0) = \tgamma(t)$ and $F(s,t,1) = \tgamma_\alpha(s)$. If $\partial_s$, $\partial_t$, and $\partial_r$ are the coordinate vector fields living in the domain of $F$, then the Lie brackets $[\partial_s, \partial_t]$, $[\partial_s,\partial_r]$ and $[\partial_t,\partial_r]$ all vanish. Hence,
\begin{equation}\label{st commute}
    \frac{D}{ds} \partial_t F - \frac{D}{dt} \partial_s F = [\partial_s F, \partial_t F] = [F_* \partial_s , F_* \partial_t] = F_*[\partial_s, \partial_t] = 0
\end{equation}
and
\begin{equation} \label{r commute}
    \frac{D}{ds} \partial_r F - \frac{D}{dr} \partial_s F = 0 \quad \text{ and } \quad \frac{D}{dt} \partial_r F - \frac{D}{dr} \partial_t F = 0
\end{equation}
similarly. Now,
\[
    \phi_\alpha^2(s,t) = \int_0^1 |\partial_r F(s,t,r)|^2 \, dr,
\]
and taking a derivative in $s$ yields
\begin{align}
    \nonumber \phi_\alpha(s,t) \partial_s \phi_\alpha(s,t) &= \int_0^1 \left\langle \frac{D}{ds} \partial_r F(s,t,r), \partial_r F(s,t,r) \right\rangle \, dr\\
    \nonumber &= \int_0^1 \partial_r \langle \partial_s F(s,t,r), \partial_r F(s,t,r) \rangle \, dr\\
    \label{first derivative} &= \langle \tgamma_\alpha'(s), \partial_r F(s,t,1) \rangle
\end{align}
where the second line follows from \eqref{r commute} and the geodesic equation $\frac{D}{dr} \partial_r F = 0$, and the third line from the fundamental theorem of calculus and the observation that $\partial_s F(s,t,0) = 0$. Moreover, since the curves $\tgamma$ and $\tgamma_\alpha$ are disjoint, $\phi_\alpha$ is nonvanishing. We then have the following fact (also noted in ~\cite{CSper} and ~\cite{Gauss}): $\partial_s \phi_\alpha(s,t)$ vanishes if and only if $\tgamma_\alpha$ is perpendicular to the geodesic adjoining $\tgamma_\alpha(s)$ and $\tgamma(t)$, and similarly if $\partial_t \phi_\alpha$ vanishes. The gradient $\nabla \phi_\alpha(s,t)$ vanishes if and only if $\tgamma$ and $\tgamma_\alpha$ are both perpendicular to the geodesic adjoining $\tgamma_\alpha(s)$ and $\tgamma(t)$.

Now we compute the mixed partial derivative $\partial_s \partial_t \phi_\alpha$. Taking a derivative in $t$ of \eqref{first derivative} yields
\begin{equation} \label{mixed derivative computation}
    \partial_s \phi_\alpha(s,t) \partial_t \phi_\alpha(s,t) + \phi_\alpha(s,t) \partial_t \partial_s \phi_\alpha(s,t) = \left \langle \tgamma_\alpha'(s), \frac{D}{dr} \partial_t F(s,t,1) \right \rangle.
\end{equation}
We claim that
\begin{equation} \label{mixed bound claim}
    \left| \frac{D}{dr} \partial_s F(s,t,1) \right| \leq 1,
\end{equation}
which, along with the observation that $|\tgamma_\alpha'|$, $|\partial_s \phi_\alpha|$, and $|\partial_t \phi_\alpha |$ are all bounded above by $1$, yields the following bound.

\begin{lemma} \label{mixed bound}
\[
    |\partial_s \partial_t \phi_\alpha | \leq 2 \phi_\alpha^{-1}.
\]
\end{lemma}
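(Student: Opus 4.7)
My plan is to reduce the lemma to the key claim \eqref{mixed bound claim}, which (in view of \eqref{r commute}) I read as a bound on the covariant derivative at $r=1$ of the Jacobi field $J(r):=\partial_t F(s,t,r)$. Granted this, the lemma follows from \eqref{mixed derivative computation} after bounding the remaining factors: by \eqref{first derivative} and Cauchy--Schwarz, using $|\partial_r F|=\phi_\alpha$, one has $|\partial_s\phi_\alpha|,\,|\partial_t\phi_\alpha|\le 1$, and $|\tgamma_\alpha'|=1$ since $\tgamma$ is unit speed. Rearranging \eqref{mixed derivative computation} and dividing by $\phi_\alpha$ then gives $|\partial_s\partial_t\phi_\alpha|\le 2\phi_\alpha^{-1}$.

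For the Jacobi-field bound, note that $J$ is a Jacobi field along the constant-speed $\phi_\alpha$ geodesic $r\mapsto F(s,t,r)$ with endpoint data $J(0)=\tgamma'(t)$ and $J(1)=0$. The idea is to split $J=J^T+J^\perp$ relative to $V:=\partial_r F$ and bound each piece separately. For the tangential part, $\langle J,V\rangle$ is affine in $r$, and the endpoint data force $\langle J(r),V(r)\rangle=a(1-r)$ with $|a|=|\langle\tgamma'(t),V(0)\rangle|\le\phi_\alpha$. A short computation then yields $|\frac{D}{dr}J^T(1)|=|a|/\phi_\alpha$ and, by Pythagoras at $r=0$, $|J^\perp(0)|^2=1-a^2/\phi_\alpha^2$.

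The perpendicular piece is where nonpositive curvature enters essentially. Writing $J^\perp=g\,N$ for a unit parallel normal field $N$, the Jacobi equation reduces to $g''=-K\phi_\alpha^2 g$; since $K\le 0$, we have $gg''\ge 0$, so after choosing the sign of $N$ so that $g(0)\ge 0$, the function $g$ is nonnegative and convex on $[0,1]$ with $g(1)=0$. A tangent-line estimate at $r=1$ combined with nonnegativity then forces $|g'(1)|\le g(0)$. The Pythagorean decomposition at $r=1$ gives $|\frac{D}{dr}J(1)|^2\le a^2/\phi_\alpha^2+(1-a^2/\phi_\alpha^2)=1$, as required. The main technical hurdle is this convexity step for the perpendicular Jacobi field; once it is in place, the tangential and perpendicular estimates are tuned so that their Pythagorean sum collapses exactly to the constant $1$.
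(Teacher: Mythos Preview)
Your proof is correct and follows essentially the same approach as the paper: decompose the Jacobi field $\partial_t F$ into tangential and perpendicular components relative to $\partial_r F$, use that the tangential component is affine in $r$, use convexity from $K\le 0$ to bound the perpendicular component, and combine via Pythagoras to get $|\frac{D}{dr}J(1)|\le 1$. Your explicit tracking of the speed factor $\phi_\alpha$ is arguably cleaner than the paper's normalization (where the line $1=f(0)^2+h(0)^2$ tacitly suppresses a $\phi_\alpha^2$), but the substance is identical.
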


\begin{proof}
In light of \eqref{mixed derivative computation}, it suffices only to verify our claim \eqref{mixed bound claim}. For fixed $s$ and $t$ we write $\partial_s F$ as the sum of parallel and perpendicular parts
\[  
	\partial_s F = f(r) \partial_r F + h(r) \partial_r F^\perp.
\]
Now,
\[
    1 = |\partial_s F(s,t,0)|^2 = f(0)^2 + h(0)^2,
\]
and since $\frac{D}{dr} \partial_r F = 0$ (and indeed also $\frac{D}{dr} \partial_r F^\perp = 0$), it suffices to show that
\begin{equation} \label{bounds on f and h}
    f'(1) = -f(0) \qquad \text{ and } \qquad |h'(1)| \leq |h(0)|.
\end{equation}
Since $f \partial_r F$ and $h \partial_r F^\perp$ are parallel and perpendicular Jacobi fields along $r \mapsto F(s,t,r)$, respectively, we have
\[
    f'' = 0,
\]
from which the first part of \eqref{bounds on f and h} follows, and
\[
    h'' + K h = 0
\]
where $\tilde K = \tilde K(F(s,t,r))$ is the sectional curvature of $(\R^2,\tilde g)$. We assume without loss of generality by our choice of $F^\perp$ that $h(0) \geq 0$. Since $\tilde K \leq 0$, $h$ vanishes uniquely at $1$, and so $h(r) > 0$ for $0 < r < 1$. Again since $\tilde K \leq 0$, $h$ is convex on $[0,1]$. In particular,
\[
    0 \leq h(r) \leq h(0)(1 - r),
\]
so
\[
    0 \geq h'(1) \geq -h(0),
\]
from which we obtain the second part of \eqref{bounds on f and h}.
\end{proof}

Now we compute $\partial_s^2 \phi_\alpha$. Fix $t_0$ and let $r \mapsto \zeta(s,r)$ denote the unit speed geodesic with $\zeta(s,0) = \tgamma(t_0)$ and $\zeta(s,\phi_\alpha(s,t_0)) = \tgamma_\alpha(s)$. To avoid ambiguity in the notation, we fix $s_0$ and let $r_0 = \phi_\alpha(s_0,t_0)$, and compute $\partial_s^2 \phi_\alpha(s_0,t_0)$. By \eqref{first derivative}, 
\[
    \partial_s \phi_\alpha(s,t_0) = \left \langle \tgamma_\alpha'(s), \partial_r \zeta(s,\phi_\alpha(s,t_0)) \right \rangle.
\]
Differentiating in $s$ yields
\begin{equation} \label{2t}
    \partial_s^2 \phi_\alpha(s_0,t_0) = \left \langle \frac{D}{ds} \tgamma_\alpha'(s_0), \partial_r \zeta(s_0,r_0) \right \rangle + \left \langle \tgamma_\alpha'(s_0), \left. \frac{D}{ds} \right|_{s = s_0} \partial_r \zeta(s,\phi(s,t_0)) \right \rangle.
\end{equation}
Now
\begin{align*}
    \left. \frac{D}{ds} \right|_{s = s_0} \partial_r \zeta(s,\phi(s,t_0)) &= \frac{D}{ds} \partial_r \zeta(s_0, r_0) + \partial_s \phi_\alpha(s_0,t_0) \frac{D}{dr} \partial_r \zeta (s_0, r_0).
\end{align*}
The second term on the right vanishes since $r \mapsto \zeta(s_0,r)$ is a geodesic. The curve $s \mapsto \zeta(s,r_0)$ is a geodesic circle of radius $r_0$. Hence, $\partial_r \zeta$ and $\partial_s \zeta$ are perpendicular by Gauss' lemma. Since in addition $|\partial_r \zeta| = 1$, there exists a scalar $\kappa$ such that
\begin{equation} \label{def kappa}
    \frac{D}{ds} \partial_r \zeta = \kappa \partial_s \zeta.
\end{equation}
In fact, $\kappa(s_0,r_0)$ is the geodesic curvature of the circle $s \mapsto \zeta(s,r_0)$ at $s = s_0$.
Hence, we write \eqref{2t} as
\begin{equation} \label{2t 2}
    \partial_s^2 \phi_\alpha(s_0,t_0) = \left \langle \frac{D}{ds} \tgamma_\alpha'(s_0), \partial_r \zeta(s_0, r_0) \right \rangle + \kappa(s_0,r_0) \left \langle \tgamma_\alpha'(s_0), \partial_s \zeta(s_0,r_0) \right \rangle.
\end{equation}
Let $\theta \in [0, \pi/2]$ denote the angle of intersection between the curve $\tgamma_\alpha$ and the circle $s \mapsto \zeta(s,r_0)$ (see Figure \ref{theta fig}).
\begin{figure}
	\centering
	\includegraphics[width=0.8\textwidth]{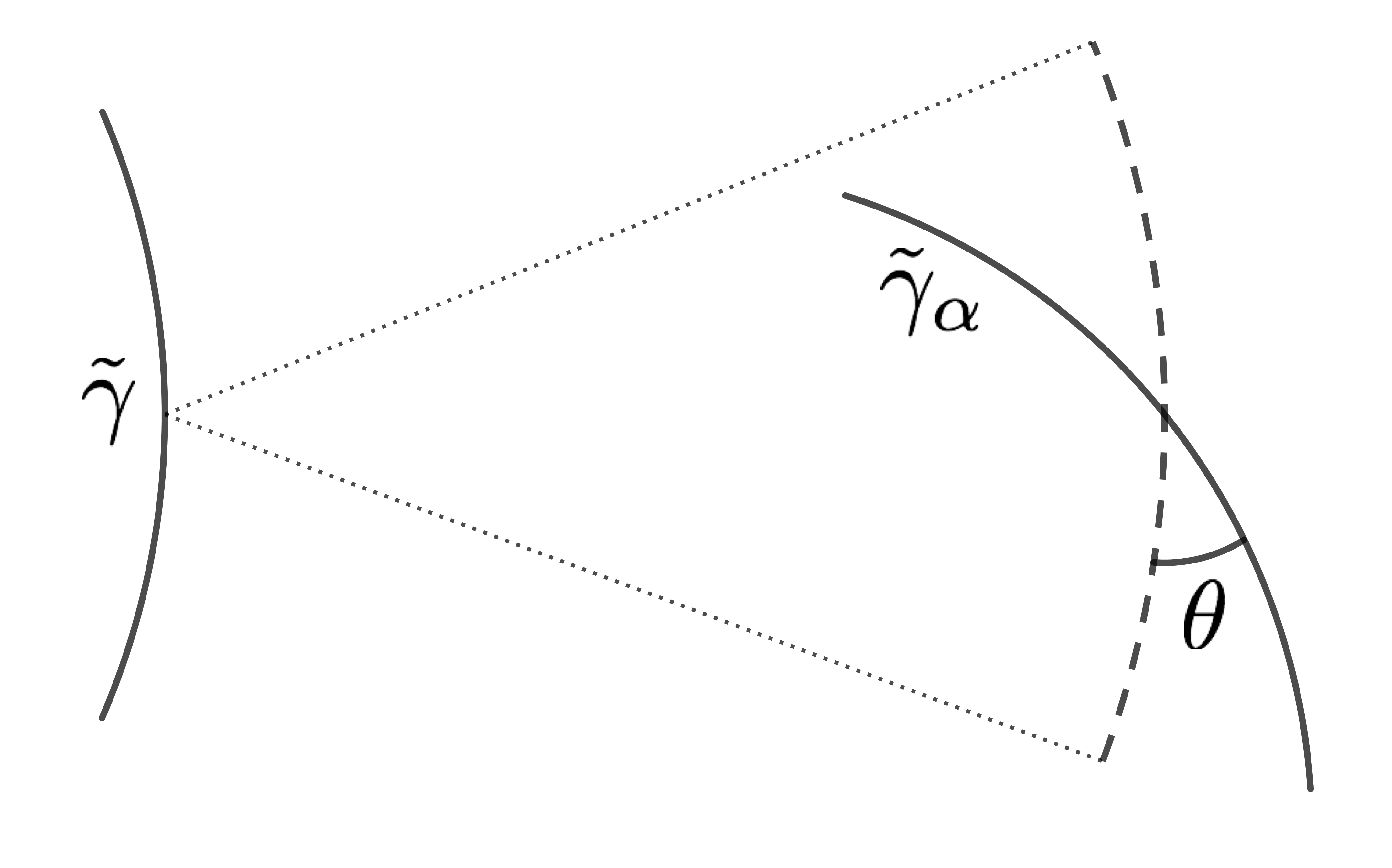}
	\caption{$\tilde \gamma$ and $\tilde \gamma_\alpha$ (solid curves, on the left and right, resp.) and the angle $\theta$ between $\tilde \gamma_\alpha$ and a geodesic circle with center on $\tilde \gamma$.}
	\label{theta fig}
\end{figure}
We have
\[
    \tgamma_\alpha'(s_0) = \left. \frac{\partial}{\partial s} \right|_{s = s_0} \zeta(s,\phi(s,t_0)) = \partial_s \zeta(s_0,r_0) + \partial_s \phi(s_0,t_0) \partial_r \zeta(s_0,r_0),
\]
and since $\partial_r \zeta$ and $\partial_s \zeta$ are perpendicular,
\[
    \langle \tgamma_\alpha'(s_0) , \partial_s \zeta(s_0,r_0) \rangle = |\partial_s \zeta(s_0,r_0)|^2 = \cos^2(\theta).
\]
The line above and \eqref{2t 2} yield the key computation
\begin{equation} \label{pure second}
    \partial_s^2 \phi_\alpha(s_0,t_0) = \cos(\theta) ( \pm \kappa_{\gamma}(s_0) + \cos(\theta) \kappa(s_0,r_0) ),
\end{equation}
where $\pm$ matches the sign of $\langle \frac{D}{ds} \tgamma_\alpha', \partial_r \zeta \rangle$. This computation is the last thing we need to prove Proposition \ref{medium time}.


\begin{proof}[Proof of Proposition \ref{medium time}]
    Since $A$ is fixed and finite, we may restrict the support of $b$ without worrying about doing so uniformly over elements of $A$. Fix $\alpha \in A \setminus I$. Let $D$ denote the diagonal of $\mathcal I \times \mathcal I$. We claim that that
    \begin{equation} \label{pf medium time 1}
        D \subset \{ \partial_t^2 \phi_\alpha \neq 0\} \cup \{ \partial_s^2 \phi_\alpha \neq 0\} \cup \{ \nabla \phi_\alpha \neq 0 \}
    \end{equation}
    Provided our claim holds, we take the support of $b$ small enough so that $\supp b \times \supp b$ lies entirely in one of the three open sets above. If $\supp b \times \supp b$ lies in one of the first two sets, the proposition follows by stationary phase ~\cite[Theorem 1.1.1]{SFIO} in the appropriate variable. If $\supp b \times \supp b$ is contained in the third set, then the proposition follows from nonstationary phase ~\cite[Lemma 0.4.7]{SFIO}.
    
    To prove \eqref{pf medium time 1}, we suppose $s_0 = t_0$ and $\nabla \phi_\alpha(s_0,t_0) = 0$ and $\partial_s^2 \phi_\alpha(s_0,t_0) = 0$ and show that $\partial_t^2 \phi_\alpha(s_0,t_0) \neq 0$. By \eqref{pure second},
    \[
        0 = \pm \kappa_\gamma(s_0) + \kappa(s_0,r_0)
    \]
    where $r_0 = \phi_\alpha(s_0,t_0)$. This tells us $\pm$ must take the negative sign, and that $\tgamma_\alpha$ is curved ``towards" $\tgamma(t_0)$. This means that $\tgamma$ must be curved ``away" from $\tgamma_\alpha(s_0)$, since otherwise the midpoint of the geodesic connecting $\tgamma(t_0)$ and $\tgamma_\alpha(s_0)$ would be fixed by the deck transformation $\alpha$ (see Figure \ref{periodicfig}). Hence $\partial_t^2 \phi_\alpha(s_0,t_0) > 0$ by the analogous computation to \eqref{pure second}.
\begin{figure}
	\centering
	\includegraphics[width=0.9\textwidth]{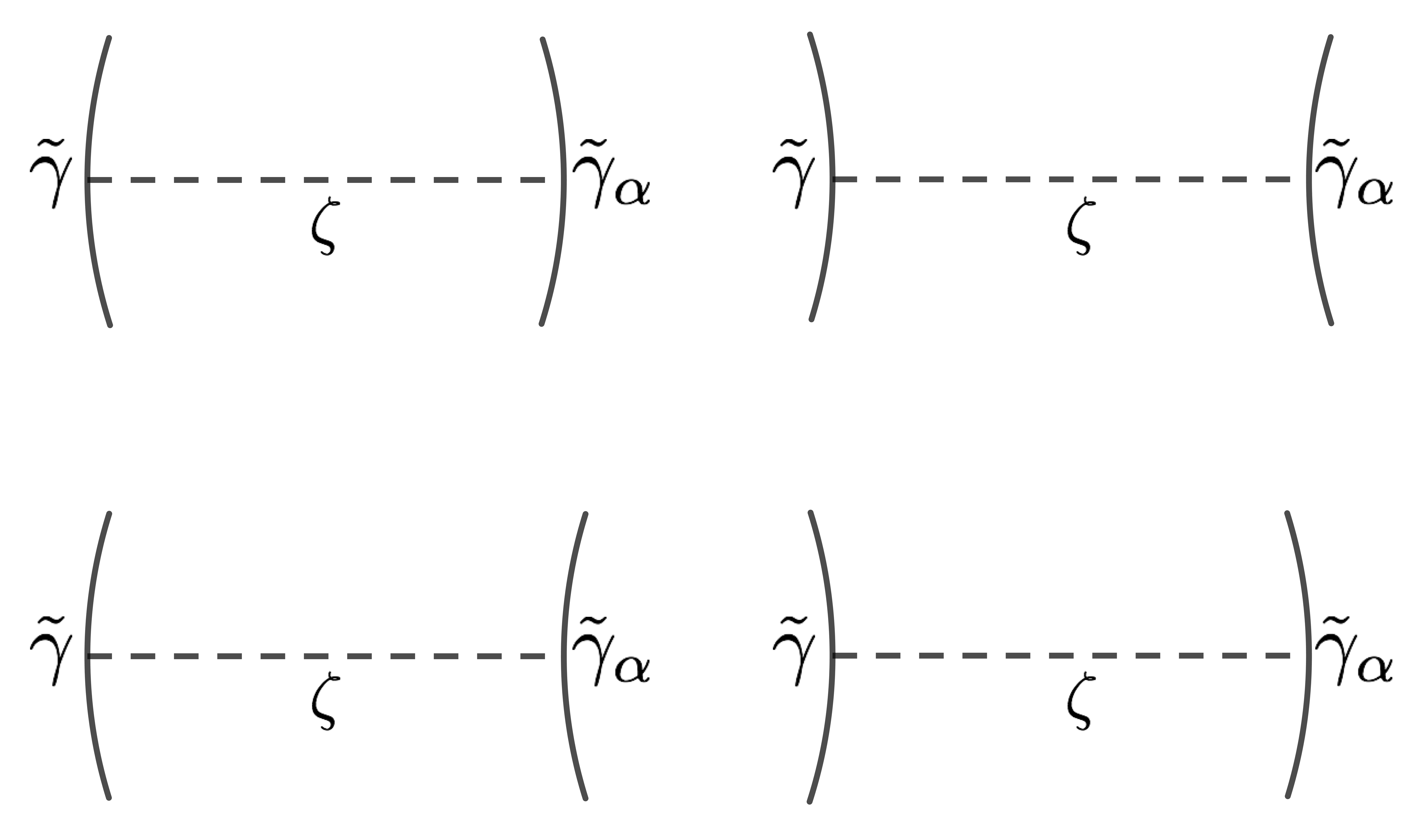}
	\caption{Depictions of $\tilde \gamma$ and $\tilde \gamma_\alpha$ connected by a normal geodesic $\zeta$ (the dashed line) adjoining $\tilde \gamma(t_0)$ and $\tilde \gamma_\alpha(s_0)$ at $s_0 = t_0$. The top two situations are impossible, otherwise the midpoint of $\zeta$ would be a fixed point of $\alpha$.}
	\label{periodicfig}
\end{figure}
\end{proof}


\section{Curvature of circles}\label{CURVATURE OF CIRCLES}

Fix $t_0$ and let $\kappa$ be as in \eqref{def kappa}. To apply \eqref{pure second} in any useful way, we need to know something about the function $\kappa(s,r)$, the curvature of a geodesic circle of radius $r$ centered $\tgamma(t_0)$, and how it relates to $\bfk$. Before this, though, we verify Definition \ref{def k}.

\begin{proposition} \label{def k good}
	The function $\bfk : SM \to \R$ in Definition \ref{def k} is well-defined, continuous, and nonnegative.
\end{proposition}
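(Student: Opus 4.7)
First I would reduce the defining conditions to a scalar ODE. Since $J(0)$ and $\tfrac{D}{dr}J(0) = \bfk(v) J(0)$ are both orthogonal to $\zeta'(0)$, a short computation shows $\langle J, \zeta'\rangle$ satisfies $(\langle J, \zeta'\rangle)'' = 0$ with vanishing initial data, hence $J \perp \zeta'$ along all of $\zeta$. Writing $J(r) = h(r) E(r)$ where $E$ is the parallel transport of $\zeta'(0)^\perp$, the Jacobi equation becomes the scalar equation $h'' + K h = 0$ with $K(r) := K(\zeta(r)) \leq 0$ and initial data $h(0) = 1$, $h'(0) = \bfk(v)$. Thus $\bfk(v)$ is the unique initial slope making this scalar solution bounded on $(-\infty, 0]$, and the proposition reduces to showing such a slope exists, is unique, is nonnegative, and varies continuously in $v \in SM$.

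For existence and nonnegativity I would use a monotone approximation on finite intervals. For each $R > 0$ let $h_R$ be the unique solution with $h_R(-R) = 0$ and $h_R(0) = 1$, and set $\mu_R := h_R'(0)$. Standard no-conjugate-points arguments valid for $K \leq 0$ show $h_R > 0$ on $(-R, 0]$, so $h_R$ is convex there and hence bounded above by the chord $(r + R)/R \leq 1$; evaluating this bound just left of $0$ forces $\mu_R \geq 1/R > 0$. Passing to the Riccati variable $u_R := h_R'/h_R$ satisfying $u' = -K - u^2$, ODE uniqueness forbids two such trajectories from crossing. Since $u_R$ blows up at its own vanishing time $-R$, enlarging $R$ produces a trajectory that must lie strictly below the previous one for all $r \in (-R, 0]$, so $R \mapsto \mu_R = u_R(0)$ is monotonically decreasing. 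Hence $\mu_R \downarrow \mu_\infty \geq 0$, and continuous dependence on initial data yields a limit $h_\infty$ with $h_\infty(0) = 1$, $h_\infty'(0) = \mu_\infty$, and $0 \leq h_\infty \leq 1$, which is in particular bounded. For uniqueness, any other bounded solution $\tilde h$ with $\tilde h(0) = 1$ produces a bounded difference $f := \tilde h - h_\infty$ solving $f'' + Kf = 0$ with $f(0) = 0$. If $f \not\equiv 0$ I may assume $f'(0) > 0$, so that $f < 0$ just left of $0$; on $\{f \leq 0\}$ the relation $f'' = -Kf \leq 0$ makes $f$ concave. A putative second zero at some $r^\ast < 0$ would require $f'(r^\ast) \leq 0$, contradicting the concavity bound $f'(r^\ast) \geq f'(0) > 0$; hence $f < 0$ on all of $(-\infty, 0)$, and the concavity bound then gives $f(r) \leq f'(0)\, r$, which is unbounded---a contradiction. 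So $\tilde h = h_\infty$ and $\bfk(v) := \mu_\infty$ is well-defined and nonnegative.

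The main obstacle will be continuity of $\bfk$. Each $\mu_R$ is continuous on $SM$ by continuous dependence of $\zeta_v$ on $v$ and hence of the ODE coefficient $K \circ \zeta_v$, so the monotone decreasing limit $\bfk = \inf_R \mu_R$ is automatically upper semicontinuous. For lower semicontinuity I would fix $v_0 \in SM$ and $c < \bfk(v_0)$, pick $c'$ with $c < c' < \bfk(v_0)$, and analyze the solution $h^{c'}$ along $\zeta_{v_0}$ with initial data $h^{c'}(0) = 1$, $(h^{c'})'(0) = c'$. Decomposing $h^{c'} = h_\infty + (c' - \bfk(v_0)) s_\infty$, where $s_\infty(0) = 0,\, s_\infty'(0) = 1$ is the complementary solution, and using the Sturm bound $s_\infty'(r) \geq 1$ for $r \leq 0$ (obtained by comparison with the flat case), one sees that $(h^{c'})'(r)$ must become negative beyond some $r = -R_0$ while $h^{c'}$ remains positive there. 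By continuous dependence on the compact interval $[-R_0, 0]$, for $v$ near $v_0$ the corresponding $h^{c'}$ along $\zeta_v$ also satisfies $h^{c'}(-R_0) > 0$ and $(h^{c'})'(-R_0) < 0$. Once these inequalities hold, the relation $h'' = -Kh \geq 0$ wherever $h \geq 0$ forces $h'$ to stay nonpositive and therefore $h$ to stay positive all the way back to $-\infty$ along $\zeta_v$. This yields $\bfk(v) \geq c' > c$, completing the proof of lower semicontinuity and hence of continuity.
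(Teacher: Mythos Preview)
Your argument is correct and differs from the paper's in two places. For existence, the paper obtains convergence of the approximants $h_R$ by proving the explicit rate $|\partial_R h_R(r)| \le -r/R^2$ and integrating; you instead pass to the Riccati variable $u = h'/h$, use the non-crossing of solutions of $u' = -K - u^2$ to get monotonicity of $\mu_R = u_R(0)$, and take the monotone limit. Your route is cleaner here. For continuity, the paper reuses that same quantitative rate: it gives $h_R \to h_\infty$ uniformly on compact $r$-intervals \emph{uniformly in $v$}, so a $3\epsilon$ argument plus ODE regularity yields continuity of $\bfk(v) = h_\infty'(0)$. Your semicontinuity approach (upper semicontinuity from $\bfk = \inf_R \mu_R$ with each $\mu_R$ continuous; lower semicontinuity via the open condition $h^{c'}(-R_0)>0,\ (h^{c'})'(-R_0)<0$) is a valid alternative that avoids any explicit rate, at the cost of needing the characterization ``$h^{c'}>0$ on $(-\infty,0]$ iff $c'\le\bfk(v)$''. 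One small gap to patch in your lower-semicontinuity step: to invoke that characterization you need $h^{c'}$ positive on \emph{all} of $(-\infty,0]$ along $\zeta_v$, not just on $(-\infty,-R_0]$. This is easy---along $\zeta_{v_0}$ you have $h^{c'} = h_\infty + (c'-\bfk(v_0))s_\infty > h_\infty > 0$ on the compact interval $[-R_0,0]$, and continuous dependence then preserves positivity there for $v$ near $v_0$---but it should be said.
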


\begin{proof}
	Let $\zeta$ and $J$ be as in Definition \ref{def k}. By properties of Jacobi fields, $\bfk(v)$ is independent of the choice of direction of $J(0)$, and so we fix one. Since both $J(0)$ and $\frac{D}{dr} J(0)$ are perpendicular to $\zeta'(0)$,
    \[
        \langle J(r), \zeta'(r) \rangle = 0 \quad \text{ for all } r.
    \]
    We write
    \[
        J(r) = h(r) w(r)
    \]
    where $w(r)$ is the vector at $\zeta(r)$ obtained through a parallel transport of $J(0)$ and where $h$ satisfies
    \begin{align}
        \label{h jacobi equation} h'' + Kh &= 0 \quad \text{ and }\\
        \label{h initial condition 0} h(0) &= 1.
    \end{align}
    Here $K = K(\zeta(r))$ is the sectional curvature at $\zeta(r)$. It suffices to show the existence and uniqueness of such an $h$ with
    \begin{equation} \label{h bounded}
        |h(r)| \leq C \quad \text{ for } r \leq 0
    \end{equation}
    and afterwards set $\mathbf k(v) = h'(0)$. Note $\bfk(v)$ is necessarily nonnegative since otherwise \eqref{h bounded} would break by convexity. The difference of two such functions $h_1$ and $h_2$ satisfies \eqref{h jacobi equation} and \eqref{h bounded} and $(h_1 - h_2)'(0) = 0$. If $(h_1 - h_2)'(0) \neq 0$, then $(h_1 - h_2)(r)$ is unbounded for $r \leq 0$ by convexity. Hence, $h_1 = h_2$ identically. This proves uniqueness.

    To prove existence, we construct a bounded $h$ as a limit. For all $s > 0$, let $h_{s}$ denote the unique function satisfying \eqref{h jacobi equation}, \eqref{h initial condition 0}, and $h_s(-s) = 0$. We construct as a limit
    \begin{equation} \label{h limit}
        h_{\infty} = \lim_{s \to \infty} h_s = h_{1} + \int_1^\infty \partial_s h_s \, ds.
    \end{equation}
    We will show
    \begin{equation} \label{h t derivative bound}
        |\partial_s h_s(r)| \leq -\frac{r}{s^2} \quad \text{ for } r \leq 0,
    \end{equation}
    which guarantees uniform convergence of \eqref{h limit} for $r$ in compact sets, whence $h_{\infty}$ satisfies \eqref{h jacobi equation} by regularity. Moreover for $r \leq 0$,
    \[
    	h_\infty(r) = \int_{-r}^\infty \partial_s h_{s}(r) \, ds \leq 1
    \]
    which is stronger than required. We now prove \eqref{h t derivative bound}. $h_s$ vanishes uniquely at $-s$ since $K \leq 0$. Hence, $h_s \geq 0$ and so $h_s'' \geq 0$ on $[-s,0]$. Since $h_s(0) = 1$ and $h_s(-s) = 0$,
    \[
        0 \leq h_s(r) \leq \left( 1 + \frac{r}{s} \right) \quad \text{ for } -s \leq r \leq 0
    \]
    by convexity. We conclude that
    \[
        0 < h_s'(-s) \leq \frac{1}{s}
    \]
    by writing $h_s'(-s)$ using the limit definition of the derivative and applying the previous inequality. Since $h_s(-s) = 0$ for all $s > 0$,
    \[
        0 = \frac{d}{ds} h_s(-s) = -h_s'(-s) + \partial_s h_s(-s)
    \]
    whence
    \begin{equation} \label{dt ht endpoint bound}
        0 < \partial_s h_s(-s) \leq \frac{1}{s}.
    \end{equation}
    $\partial_s h_s$ satisfies \eqref{h jacobi equation} with initial data $\partial_s h_s(0) = 0$. Since $\partial_s h_s(-s) > 0$, a similar convexity argument yields
    \[
        0 < \partial_s h_s(r) \leq -\partial_s h_s(-s) \frac{r}{s} \quad \text{ for } -s \leq r < 0.
    \]
    \eqref{h t derivative bound} follows from the above inequality and \eqref{dt ht endpoint bound}.
    
    Finally, we show $\mathbf k$ is continuous on $SM$. To do so, we show that $\mathbf k$ is continuous on every continuous path $t \mapsto v(t)$ in $SM$. If $r \mapsto \zeta(t,r)$ is the geodesic with $\partial_r \zeta(t,0) = v(t)$, we let $h_\infty(t,r)$ and $h_s(t,r)$ be as constructed above along the geodesic $r \mapsto \zeta(t,r)$. In the limit as $t \to 0$, the sectional curvature $K(\zeta(t,r))$ converges to $K(\zeta(0,r))$ uniformly for $r$ in a compact set. Combined with \eqref{h jacobi equation}, we have for any $\epsilon > 0$ and $s > 0$ a $\delta > 0$ such that
    \[
        |h_s(t,r) - h_s(0,r)| < \frac{\epsilon}{3}  \quad \text{ for } -s \leq r \leq 0
    \]
    if $|t| < \delta$. Moreover if $r$ lies in some compact set, by \eqref{h t derivative bound} and the fundamental theorem of calculus, there exists $s > 0$ large enough such that
    \[
        |h_\infty(t,r) - h_s(t,r) | < \frac{\epsilon}{3}
    \]
    independently of $t$. Putting these bounds together, we have
    \begin{align*}
        |h_\infty(t,r) - &h_\infty(0,r)|\\
        &\leq |h_\infty(t,r) - h_s(t,r)|  + |h_s(t,r) - h_s(0,r)|  + |h_s(0,r) - h_\infty(0,r)| < \epsilon,
    \end{align*}
    i.e. $h_\infty(t,r) \to h_\infty(0,r)$ uniformly for $r$ in a compact set. By regularity, $\partial_r h(t,r) \to \partial_r h(0,r)$ as $t \to 0$, and in particular $\mathbf k(v(t)) \to \mathbf k(v(0))$.
\end{proof}

We lift $\bfk$ to the universal cover. Let $\tilde \bfk$ denote the function on $S\R^2$ for which $\tilde \bfk(\tilde v) = \bfk(v)$ wherever $\tilde v \in S\R^2$ is a lift of $v \in SM$. Since the covering map is a local isometry, $\tilde \bfk$ satisfies Definition \ref{def k} where $M$ in the definition is replaced with the universal cover. Furthermore, we can loosen Definition \ref{def k} a little bit. Let $v$, $\zeta$, and $J$ be as in Definition \ref{def k} except with $M$ replaced by the universal cover. If $|J(0)|$ is allowed to be any positive number, we have
\[
    \tilde \bfk(v) = \frac{|\frac{D}{dr} J(0)|}{|J(0)|}.
\]
Since $J$ is a perpendicular Jacobi field along $\zeta$,
\[
    J(r) = h(r) w(r)
\]
where $w$ is a unit normal vector field along $\zeta$ and $h$ satisfies
\[
    h''(r) + \tilde K(\zeta(r))h(r) = 0.
\]
We conclude that
\[
    \tilde \bfk(\zeta'(r)) = \frac{|\frac{D}{dr} J(r)|}{|J(r)|} = \frac{h'(r)}{h(r)} \qquad \text{ for all } r \geq 0
\]
It follows
\begin{equation} \label{k ode}
    \frac{d}{dr} \tilde \bfk(\zeta'(r)) = \frac{h''(r)}{h(r)} - \frac{h'(r)^2}{h(r)^2} = - \tilde K(\zeta(r)) - \tilde \bfk(\zeta'(r))^2.
\end{equation}

Note by the same argument for \eqref{st commute},
\[
    \frac{D}{ds} \partial_r \zeta = \frac{D}{dr} \partial_s \zeta.
\]
This and \eqref{def kappa} yields
\[
    \frac{D^2}{dr^2} \partial_s \zeta = \frac{D}{dr} \left( \kappa \partial_s \zeta \right) = \partial_r \kappa \partial_s \zeta + \kappa \frac{D}{dr} \partial_s \zeta =  (\partial_r \kappa + \kappa^2) \partial_s \zeta.
\]
On the other hand since $\partial_s \zeta$ is a perpendicular Jacobi field along $r \mapsto \zeta(s,r)$,
\[
    \frac{D^2}{dr^2} \partial_s \zeta = -\tilde K \partial_s \zeta.
\]
Putting these together, we obtain
\begin{equation} \label{kappa ode}
    \partial_r \kappa + \kappa^2 + \tilde K = 0,
\end{equation}
the same differential equation that $\bfk$ satisfies in \eqref{k ode}. As a consequence, we deduce the following facts.

\begin{lemma} \label{large radius}
    Let $r \mapsto \zeta(r)$ be a unit-speed geodesic in $(\R^2,\tilde g)$ and $\kappa(r)$ the geodesic curvature at $\zeta(r)$ of the circle of radius $r$ with center at $\zeta(0)$. Moreover, suppose $0 \geq K_0 \geq K \geq K_1$ for some constants $K_0$ and $K_1$. The following are true.
    \begin{enumerate}
    \item $\displaystyle 0 < \kappa(r) - \tilde \bfk(\partial_r \zeta(r)) \leq r^{-1}$ for all $r > 0$.
    \item $\displaystyle \sqrt{-K_0} \leq \tilde \bfk(v) \leq \sqrt{-K_1}$ for all $v \in S\tilde M$.
    \item For $r > 0$,
    \[
    	\kappa(r) \geq \begin{cases} \sqrt{-K_0} \coth(\sqrt{-K_0} r) & \text{ if } K_0 < 0, \\ 1/r & \text{ if } K_0 = 0. \end{cases}
	\]
    \end{enumerate}
\end{lemma}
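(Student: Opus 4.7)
The plan is to leverage the fact that both $\kappa(r)$ and $\tilde\bfk(\partial_r\zeta(r))$ satisfy the same Riccati equation $y' + y^2 + \tilde K = 0$ along $\zeta$, so Parts (1)--(3) all reduce to comparing solutions of this ODE against each other or against explicit constant-curvature model solutions. For Part (1), set $w(r) = \kappa(r) - \tilde\bfk(\partial_r\zeta(r))$; subtracting (\ref{k ode}) from (\ref{kappa ode}) gives the linear ODE
\[
    w' + (\kappa + \tilde\bfk) w = 0.
\]
Since $\kappa(r) \sim 1/r$ as $r \to 0^+$ (the leading behavior forced by the Riccati equation, as one sees by plugging in the ansatz $\kappa = 1/r + c(r)$) while $\tilde\bfk$ is bounded on the compact $SM$, we have $w > 0$ near $0$, and uniqueness for linear ODEs forces $w > 0$ on all of $(0, \infty)$. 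For the upper bound, let $v(r) = \kappa - \tilde\bfk - 1/r$; a direct computation using both Riccati equations yields
\[
    v' + \left(\kappa + \tilde\bfk + 1/r\right) v = -2\tilde\bfk/r \leq 0.
\]
The expansion $\kappa(r) = 1/r + O(r)$ shows $v(0^+) \leq 0$, and at any would-be first positive zero $r_0$ of $v$ one has $v'(r_0) \leq -2\tilde\bfk(r_0)/r_0 \leq 0$, preventing $v$ from turning positive. Hence $v \leq 0$.

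For Part (2), set $y(r) = \tilde\bfk(\zeta'(r))$ and $z(\tau) = y(-\tau)$, so $z$ satisfies $z'(\tau) = \tilde K(\zeta(-\tau)) + z(\tau)^2$ and is bounded on $[0,\infty)$. If $z(0) > \sqrt{-K_1}$, then $u := z - \sqrt{-K_1}$ satisfies $u' \geq u(u + 2\sqrt{-K_1})$ so long as $u > 0$, and elementary ODE comparison gives either exponential growth of $u$ (when $K_1 < 0$) or finite-time blow-up (when $K_1 = 0$) as long as $u$ stays positive; both outcomes contradict the boundedness of $z$. For the lower bound, Proposition \ref{def k good} gives $z \geq 0$; if $z(0) < \sqrt{-K_0}$, then $z' \leq K_0 + z^2 \leq K_0 + z(0)^2 < 0$, a strictly negative constant, forcing $z$ to become negative in finite forward time and contradicting $z \geq 0$.

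For Part (3), compare $\kappa$ to the model curvature $\kappa_0$ in the space of constant sectional curvature $K_0$: $\kappa_0(r) = \sqrt{-K_0}\coth(\sqrt{-K_0}\, r)$ if $K_0 < 0$, or $\kappa_0(r) = 1/r$ if $K_0 = 0$; in either case $\kappa_0' + \kappa_0^2 + K_0 = 0$. Setting $w = \kappa - \kappa_0$, the two Riccati equations give
\[
    w' + (\kappa + \kappa_0) w = K_0 - \tilde K \geq 0.
\]
Near $r = 0$ both $\kappa$ and $\kappa_0$ expand as $1/r + O(r)$, so $w(r) = O(r)$, while the integrating factor $\mu(r) = \exp\bigl(\int_{r_*}^r(\kappa + \kappa_0)\, ds\bigr)$ behaves like $r^2$. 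Thus $(\mu w)' \geq 0$ on $(0, \infty)$ and $\mu(r)w(r) \to 0$ as $r \to 0^+$, giving $\mu w \geq 0$ and hence $w \geq 0$. The main obstacle I anticipate is the bookkeeping at $r = 0$ where $\kappa$ and $\kappa_0$ both diverge: one must extract enough information from the Riccati equation to justify the boundary behaviors $v(0^+) \leq 0$ in Part (1) and $\mu(r)w(r) \to 0$ in Part (3). Once those are in hand, each part is a short one-line comparison.
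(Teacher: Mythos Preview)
Your argument is correct in spirit and shares the paper's starting point---both $\kappa$ and $\tilde\bfk$ satisfy the same Riccati equation \eqref{kappa ode}---but the execution diverges. For the upper bound in (1), the paper observes that $w=\kappa-\tilde\bfk$ satisfies $w'=-(\kappa+\tilde\bfk)w$ and, since $\tilde\bfk\ge 0$, the inequality $\kappa+\tilde\bfk\ge \kappa-\tilde\bfk=w$ yields $w'\le -w^2$; a direct comparison with $u'=-u^2$ then gives $w\le 1/r$ without any boundary analysis at $r=0$. For (3) the paper likewise compares $\kappa$ directly with the constant-curvature Riccati solution via $\kappa'\ge -K_0-\kappa^2$ and the asymptotic $r\kappa(r)\to 1$. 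Your route---subtracting off the comparison solution to obtain a linear first-order ODE and using an integrating factor---is more systematic and works uniformly across all three parts, at the cost of having to justify the behavior of $\mu w$ and $\mu v$ as $r\to 0^+$; the paper's differential-inequality approach sidesteps that bookkeeping entirely.

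One genuine gap: in your ``first positive zero'' argument for $v=\kappa-\tilde\bfk-1/r\le 0$, at a zero $r_0$ you get $v'(r_0)=-2\tilde\bfk(r_0)/r_0\le 0$, but if $\tilde\bfk(r_0)=0$ (which can happen, e.g.\ in the flat case) this only gives $v'(r_0)=0$ and does not prevent $v$ from turning positive. The clean fix is to do here exactly what you do in (3): with $\mu(r)=\exp\bigl(\int^r(\kappa+\tilde\bfk+1/s)\,ds\bigr)\sim c\,r^2$ near $0$ and $v$ bounded there, $(\mu v)'=-2\mu\tilde\bfk/r\le 0$ and $\mu v\to 0$ as $r\to 0^+$, so $\mu v\le 0$ for all $r>0$. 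With that patch your proof is complete; the paper's argument is shorter for (1) precisely because $w'\le -w^2$ eliminates the need to look at $r=0$ at all.
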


\begin{proof}
    (1) Since both $\kappa$ and $\tilde \bfk$ satisfy \eqref{kappa ode}, the difference $\kappa - \tilde \bfk$ satisfies
    \begin{equation} \label{difference ode}
        \partial_r (\kappa - \tilde \bfk) = -(\kappa^2 - \tilde \bfk^2).
    \end{equation}
    Since $\kappa(r)$ is large for small $r$, we can easily guarantee that $\kappa(r_0) > \tilde \bfk(\zeta'(r))$ for $0 < r \ll 1$. Now $\kappa$ and $\tilde \bfk$ are smooth for $r > 0$, and since $\kappa - \tilde \bfk = 0$ is an equilibrium of \eqref{difference ode}, we have that
    \[
        \kappa(r) - \tilde \bfk(\zeta'(r)) > 0 \qquad \text{ for all } r > 0.
    \]
    Hence we rephrase \eqref{difference ode} and obtain
    \[
        \partial_r (\kappa - \tilde \bfk) = -\frac{\kappa + \tilde \bfk}{\kappa - \tilde \bfk}(\kappa - \tilde \bfk)^2 \leq -(\kappa - \tilde \bfk)^2,
    \]
    the inequality a consequence of the fact that $\kappa > \tilde \bfk$. We then have by comparison
    \[
        \kappa(r) - \tilde \bfk(\zeta'(r)) \leq r^{-1},
    \]
    as desired.
    
    (2) $\tilde \bfk$ is nonnegative and bounded by Proposition \ref{def k good} and the compactness of $M$. Suppose $\tilde \bfk(\zeta'(0)) < \sqrt{-K_0}$. By \eqref{k ode}
    \[
    	\partial_r \tilde \bfk(\zeta'(r)) > 0 \qquad \text{ wherever } \qquad \tilde \bfk(\zeta'(r)) < \sqrt{-K_0},
	\]
	and hence
	\[
		\tilde \bfk(\zeta'(r)) < \sqrt{-K_0} \qquad \text{ for all } r < 0.
	\]
	We conclude that $\partial_r \tilde \bfk(\zeta'(r))$ is positive bounded away from $0$ for $r < 0$, and hence $\tilde \bfk(\zeta'(r))$ is eventually negative if $r$ is negative enough. Hence, $\tilde \bfk(\zeta'(0)) \geq \sqrt{-K_0}$. A similar argument shows $\tilde \bfk(\zeta'(r))$ is unbounded for $r < 0$ if $\tilde \bfk(\zeta'(0)) > \sqrt{-K_1}$.
    
    (3) Geometric considerations show $\kappa$ has initial data
    \begin{equation} \label{geometric initial data}
    	\lim_{r \searrow 0} r \kappa(r) = 1.
    \end{equation}
    Now,
    \[
    	\kappa'(r) \geq -K_0 - \kappa(r)^2 \qquad \text{ for } r > 0,
    \]
    and part (3) follows from comparison with the ordinary differential equation $u' = -K_0 - u^2$ with the initial data \eqref{geometric initial data} and an elementary computation.
\end{proof}

We now use the computation \eqref{pure second} of $\partial_s^2 \phi_\alpha$ and Lemma \ref{large radius} to prove some uniform bounds on the derivatives of $\phi_\alpha$ to be used in the proof of Proposition \ref{large time}. Recall $\mathcal I$ is some open interval containing the support of $b$ on which $\gamma$ satisfies the hypotheses \eqref{curvature hypotheses} of Theorem \ref{main theorem}.

\begin{lemma} \label{pure second bounds}
    Suppose
    \[
        |\kappa_{\gamma}(s) - \kappa(s, \phi_\alpha(s,t))| > \epsilon \qquad \text{ for all } s,t \in \mathcal I
    \]
    for some $\epsilon > 0$. Then there exist positive constants $\delta$ and $\eta$ independent of $\alpha$ such that if the diameter of $\mathcal I$ is less than $\delta$ and $\partial_s \phi_\alpha$ is nonvanishing on $\mathcal I \times \mathcal I$, then
    \[
        |\partial_s \phi_\alpha(s,t)| \geq \eta \qquad \text{ for } s,t \in \supp b.
    \]
    On the other hand if $\partial_s \phi_\alpha(s_0,t_0) = 0$ for some $s_0,t_0 \in \mathcal I$, then
    \[
        |\partial_s^2 \phi_\alpha(s,t)| \geq \epsilon/4 \quad \text{ for } s,t \in \mathcal I.
    \]
    This result holds similarly for derivatives in $t$.
\end{lemma}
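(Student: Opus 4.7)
The plan is to combine formula \eqref{pure second} for $\partial_s^2 \phi_\alpha$ with the gap hypothesis $|\kappa_\gamma - \kappa| > \epsilon$, leveraging $\alpha$-uniform a priori bounds on the derivatives of $\phi_\alpha$. First I would collect uniform ingredients. Since $\alpha \neq I$ forces $\phi_\alpha \geq 8$, parts (1) and (2) of Lemma \ref{large radius} yield a uniform upper bound on $\kappa(s,\phi_\alpha(s,t))$, so \eqref{pure second} gives uniform bounds $|\partial_s^2 \phi_\alpha|, |\partial_t^2 \phi_\alpha| \leq C_1$ on $\mathcal{I} \times \mathcal{I}$, and Lemma \ref{mixed bound} gives $|\partial_s \partial_t \phi_\alpha| \leq 1/4$ there. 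The key identity $|\partial_s \phi_\alpha| = \sin\theta$ with $\theta \in [0,\pi/2]$ follows from $\tgamma_\alpha'(s_0) = \partial_s \zeta + \partial_s \phi_\alpha\, \partial_r \zeta$ together with $\partial_s \zeta \perp \partial_r \zeta$.

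The central pointwise estimate is as follows. Since $\kappa, \kappa_\gamma \geq 0$ one has $|\pm \kappa_\gamma + \kappa| \geq |\kappa - \kappa_\gamma| > \epsilon$; combined with $1 - \cos\theta \leq \sin^2\theta$, this turns \eqref{pure second} into
\[
|\partial_s^2 \phi_\alpha(s,t)| \geq \cos\theta \cdot \epsilon - C_0 \sin^2\theta,
\]
where $C_0$ is uniform in $\alpha$. Consequently there is some uniform $\eta_1 > 0$ such that $|\partial_s \phi_\alpha(s,t)| \leq \eta_1$ implies $|\partial_s^2 \phi_\alpha(s,t)| \geq \epsilon/2$.

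For the critical-point claim, suppose $\partial_s \phi_\alpha(s_0, t_0) = 0$. Integrating along a line segment from $(s_0, t_0)$ using the a priori bounds $C_1$ and $1/4$ yields $|\partial_s \phi_\alpha(s,t)| \leq (C_1 + 1/4) \diam(\mathcal{I})$ on $\mathcal{I} \times \mathcal{I}$; choosing $\delta$ so that $(C_1 + 1/4)\delta < \eta_1$ forces $|\partial_s^2 \phi_\alpha(s,t)| \geq \epsilon/2 \geq \epsilon/4$ throughout $\mathcal{I}\times\mathcal{I}$.

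For the nonvanishing claim I argue by contradiction: suppose $|\partial_s \phi_\alpha(s^*, t^*)| < \eta$ for some $(s^*, t^*) \in \supp b \times \supp b$. With $\eta$ and $\delta$ chosen so that $\eta + (C_1 + 1/4)\delta < \eta_1$, the same propagation gives $|\partial_s \phi_\alpha| < \eta_1$ throughout $\mathcal{I} \times \mathcal{I}$ and hence $|\partial_s^2 \phi_\alpha| \geq \epsilon/2$ there. Continuity forces $\partial_s^2 \phi_\alpha$ to have constant sign on the connected set $\mathcal{I} \times \mathcal{I}$, so $s \mapsto \partial_s \phi_\alpha(s, t^*)$ is strictly monotone with slope at least $\epsilon/2$ in absolute value; picking $\eta$ smaller than $\epsilon/2$ times the distance from $\supp b$ to $\partial \mathcal{I}$ forces this slice to change sign inside $\mathcal{I}$, contradicting nonvanishing. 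The $t$-derivative statements follow symmetrically. The principal challenge throughout is uniformity in $\alpha$; this is secured because the only $\alpha$-dependence enters through $\phi_\alpha \geq 8$, along which Lemmas \ref{mixed bound} and \ref{large radius} yield $\alpha$-independent control of $\kappa$ and of the mixed partial.
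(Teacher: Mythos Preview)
Your proposal is correct and follows essentially the same route as the paper: a uniform upper bound on $\kappa$ to control $\partial_s^2\phi_\alpha$, the pointwise implication ``$|\partial_s\phi_\alpha|$ small $\Rightarrow$ $|\partial_s^2\phi_\alpha|$ large'' via \eqref{pure second} and $\sin\theta=|\partial_s\phi_\alpha|$, propagation of smallness using the uniform bounds on second derivatives and Lemma~\ref{mixed bound}, and finally the monotonicity-in-$s$ argument exploiting the positive distance from $\supp b$ to $\partial\mathcal I$. The only cosmetic differences are that you use $1-\cos\theta\le\sin^2\theta$ where the paper uses $\cos\theta\ge 1-\eta'$, you obtain $\epsilon/2$ in the key claim rather than $\epsilon/4$, and you phrase the nonvanishing case as a contradiction rather than a direct bound; none of these changes the substance.
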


\begin{proof}
    The curvature of any geodesic circle in $(\R^2,\tilde g)$ with radius at least $1$ is bounded uniformly by Lemma \ref{large radius} and the fact that $\tilde \bfk$ is bounded. Hence, we select a global constant $C$ so that
    \[
        \sup_{s \in \mathcal I} \kappa_\gamma(s) + \sup_{s,t \in \mathcal I} \kappa(s,\phi_\alpha(s,t)) \leq C \qquad \text{ for all } \alpha \neq I
    \]
    where $\kappa(s,\phi_\alpha(s,t))$ is the curvature of the circle at $\tgamma_\alpha(s)$, with center at $\tgamma(t)$ and radius $\phi_\alpha(s,t)$ as per \eqref{def kappa}.
    Set
    \[
        \eta' = \min\left( \frac{1}{2}, \frac{\epsilon}{2C} \right).
    \]
    Our first claim is that
    \[
        |\partial_s^2 \phi_\alpha| \geq \epsilon/4 \quad \text{ if } \quad |\partial_s \phi_\alpha| \leq \eta'.
    \]
    Note first
    \[
        \sin(\theta) = |\partial_s \phi_\alpha(s,t_0)|
    \]
    (see Figure \ref{theta fig}), then if $|\partial_s \phi_\alpha(s,t_0)| \leq \eta'$,
    \[
        \cos(\theta) \geq \sqrt{1 - \eta'^2} \geq 1 - \epsilon/2C.
    \]
    Since $\eta' \leq 1/2$, we have that $\cos(\theta) \geq 1/2$ by default.
    Hence,
    \begin{align*}
        |\partial_s^2 \phi_\alpha| &\geq \frac{1}{2}\left| \pm \kappa_{\gamma} + \cos(\theta) \kappa \right|\\
                            &= \frac{1}{2}\left| \pm \kappa_{\gamma} + \kappa - (1 - \cos(\theta)) \kappa \right|\\
                            &\geq \frac{1}{2} | \kappa_{\gamma} - \kappa | - \frac{1}{2}|1 - \cos(\theta)| |\kappa|\\
                            &\geq \frac{1}{2} \epsilon -\frac{1}{2} \frac{C\epsilon}{2C}\\
                            &= \frac{\epsilon}{4},
    \end{align*}
    proving our claim.
    
    Set
    \[
        \delta = \frac{\eta'}{2\sqrt{2}(1+C^2)^{1/2}}.
    \]
    By \eqref{pure second},
    \[
        |\partial_s^2 \phi_\alpha| \leq C.
    \]
    Moreover by Lemma \ref{mixed bound}, the fact that $\mathcal I$ has diameter at most $1$, and that the injectivity radius is at least $10$, we have
    \[
        |\partial_t \partial_s \phi_\alpha(s,t)| \leq 1.
    \]
    Hence for any $(s,t)$ and $(s_0,t_0)$ in $\I \times \I$,
    \begin{align*}
        |\partial_s \phi_\alpha(s,t) - \partial_s \phi_\alpha(s_0,t_0)| &\leq (1 + C^2)^{1/2}|(s,t) - (s_0,t_0)| \leq \frac{\eta'}{2}
    \end{align*}
    since the diameter of $\I \times \I$ is no greater than $\sqrt 2 \delta$.
    In particular if $\partial_s \phi_\alpha(s_0,t_0) = 0$, then
    \[
        |\partial_s \phi_\alpha(s,t)| \leq \eta'/2 \qquad \text{ for all } s,t \in \mathcal I
    \]
    and so $|\partial_s^2 \phi_\alpha(s,t)| \geq \epsilon/4$ by our claim.
    
    Now suppose $|\partial_s \phi_\alpha(s,t)| > 0$ for all $s,t \in \mathcal I$. In the case that $|\partial_s \phi_\alpha(s_0,t_0)| \leq \eta'/2$ for some $s_0,t_0 \in \mathcal I$, $|\partial_s \phi_\alpha(s,t)| \leq \eta'$ for all $s,t \in \mathcal I$, and hence $\partial_s \phi_\alpha(s,t)$ is monotonic in $s$, and so $\partial_s \phi_\alpha$ is smallest near an endpoint of $\mathcal I$. Since $\supp b$ is closed and $\mathcal I$ open, the distance $d(\supp b, \mathcal I^c)$ from $\supp b$ to the complement of $\mathcal I$ is positive. Hence,
    \[
        |\partial_s \phi(s,t)| \geq d(\supp b, \mathcal I^c) \epsilon/4 > 0.
    \]
    The proof is complete after setting
    \[
        \eta = \min(\eta'/2, d(\supp b, \mathcal I^c) \epsilon/4).
    \]
\end{proof}


\section{Proof of Proposition \ref{large time}}

    By our hypotheses \eqref{curvature hypotheses} on the curvature of $\gamma$, and since $\bfk$ is continuous, we restrict the support of $b$ and also the interval $\mathcal I$ so that
    \[
        \inf_{s,t \in \mathcal I} |\kappa_\gamma(t) - \bfk(\pm \gamma'^\perp(s))| > 2\epsilon
    \]
    for some small $\epsilon > 0$. We first require $R$ in \eqref{def A} be at least as large as $16\epsilon^{-1}$ so that, by Lemma \ref{mixed bound},
    \begin{equation} \label{phase function mixed}
        \sup_{t,s \in \mathcal I} |\partial_t \partial_s \phi_\alpha(s,t)| \leq \epsilon/8 \qquad \text{ if } \alpha \in \Gamma \setminus A.
    \end{equation}
    Let $\zeta$ be defined as in Section \ref{PHASE FUNCTION BOUNDS}, that is let $r \mapsto \zeta(s,t,r)$ be the unit-speed geodesic with $\zeta(s,t,0) = \tgamma(t)$ and $\zeta(s,t,\phi_\alpha(s,t)) = \tgamma_\alpha(s)$. Moreover let $\kappa(s,t)$ denote the curvature at $\tgamma_\alpha(s)$ of the circle with center $\tgamma(t)$ and radius $\phi_\alpha(s,t)$ (see Figure \ref{theta fig}). By Lemma \ref{large radius} and our requirement that $R > 16 \epsilon^{-1}$,
    \[
        |\kappa(s,t) - \tilde \bfk(\partial_r \zeta(s,t,\phi_\alpha(s,t)))| < \epsilon.
    \]
    Hence,
    \begin{equation} \label{i'm tired}
        |\kappa(s,t) - \kappa_{\gamma_\alpha}(t)| > \epsilon \qquad \text{ for } s,t \in \mathcal I.
    \end{equation}
    
    To summarize, we need to show
    \begin{equation}\label{final integral}
       \left| \iint a(s,t) e^{\pm i \lambda \phi_\alpha(s,t)} \, ds \, dt \right| \leq C e^{CT} \lambda^{-1}
    \end{equation}
    where $C$ is independent of $\alpha$, and where for simplicity we have written
    \[
        a(s,t) = b(s,t) a_\pm(T,\lambda; \tgamma_\alpha(s), \tgamma(t)).
    \]
    Considering \eqref{i'm tired}, we restrict the diameter of $\mathcal I$ to be less than the $\delta$ in Lemma \ref{pure second bounds}. If $|\partial_s \phi_\alpha| > 0$ on $\mathcal I \times \mathcal I$, $|\partial_s \phi_\alpha| \geq \eta$ on $\supp b \times \supp b$ for some $\eta > 0$ independent of $\alpha$. Then we integrate by parts to write the integral in \eqref{final integral} as
    \[
        \frac{1}{\pm i\lambda} \iint \partial_s \left( \frac{a(s,t)}{\partial_s \phi_\alpha(s,t)} \right) e^{\pm i \lambda \phi_\alpha(s,t)} \, ds \, dt.
    \]
    Now
    \begin{equation}\label{i'm very very tired}
        |\partial_s a(s,t)| \lesssim e^{CT}
    \end{equation}
    by \eqref{KB a bounded} and \eqref{KB a bound x}, and
    since $|\partial_s \phi_\alpha| > \eta$, we have
    \[
        \partial_s \left( \frac{a}{\partial_t \phi_\alpha} \right) \leq Ce^{CT},
    \]
    from which the desired bound follows. We obtain the desired bound similarly if $\partial_t \phi_\alpha$ does not vanish in $\mathcal I \times \mathcal I$.
    
    By Lemma \ref{pure second bounds}, all that is left is the case that $\nabla \phi_\alpha$ vanishes at exactly one point $(s_0,t_0) \in \mathcal I \times \mathcal I$. By a translation, we assume without loss of generality that $(s_0,t_0) = (0,0)$. In this case,
    \begin{equation} \label{diagonal bounds}
        |\partial_s^2 \phi_\alpha| \geq \epsilon/4 \quad \text{ and } \quad |\partial_t^2 \phi_\alpha| \geq \epsilon/4
    \end{equation}
    on $\mathcal I \times \mathcal I$. We use a careful stationary phase argument to obtain \eqref{final integral}. By \eqref{phase function mixed} and \eqref{diagonal bounds},
    \[
    	\left| \begin{bmatrix}
			\partial_s^2 \phi_\alpha & \partial_s \partial_t \phi_\alpha \\
			\partial_s \partial_t \phi_\alpha & \partial_t^2 \phi_\alpha
		\end{bmatrix} \xi \right| \geq \frac{\epsilon}{8} |\xi| \qquad \text{ for all } \xi \in \R^2.
    \]
    Hence by the mean value theorem, there exists $c'$ depending only on $\epsilon$ such that
    \begin{equation} \label{gradient phi bounds}
        |\nabla \phi(s,t)| \geq c' |(s,t)| \qquad \text{ for all } s,t \in \mathcal I.
    \end{equation}
    Let $\psi \in C_0^\infty(\R^2)$ with $\psi(s,t) = 1$ for $|(s,t)| \leq 1/2$ and $\psi(s,t) = 0$ for $|(s,t)| \geq 1$. We write the integral in \eqref{final integral} as the sum of respective parts
    \begin{align*}
        I + II = \iint &\psi(\lambda^{1/2} s, \lambda^{1/2} t ) a(s,t) e^{\pm i \lambda \phi_\alpha(s,t)} \, ds \, dt\\
        &+ \iint (1 - \psi(\lambda^{1/2} s, \lambda^{1/2} t )) a(s,t) e^{\pm i \lambda \phi_\alpha(s,t)} \, ds \, dt.
    \end{align*}
    We have trivially
    \[
        |I| \leq Ce^{CT} \lambda^{-1},
    \]
    and so it suffices to bound $II$. We define an operator
    \[
        L = \frac{\nabla \phi}{\pm i \lambda |\nabla \phi|^2} \cdot \nabla
    \]
    with adjoint
    \[
        L^* f = -\frac{1}{\pm i\lambda} \left[ \partial_s \left( \frac{\partial_s \phi_\alpha}{|\nabla \phi_\alpha|^2} f \right) + \partial_t \left( \frac{\partial_t \phi_\alpha}{|\nabla \phi_\alpha|^2} f \right) \right].
    \]
    Then since
    \[
        L e^{\pm i \lambda \phi_\alpha} = e^{\pm i \lambda \phi_\alpha},
    \]
    we write
    \begin{align*}
        II &= \iint (1 - \psi(\lambda^{1/2}s, \lambda^{1/2}t)) a(s,t) Le^{\pm i \lambda \phi_\alpha(s,t)} \, ds \, dt \\
        &= \iint L^*[(1 - \psi(\lambda^{1/2}s, \lambda^{1/2}t)) a(s,t)] e^{\pm i \lambda \phi_\alpha(s,t)} \, ds \, dt.
    \end{align*}
    Firstly,
    \[
        \left| \partial_s \frac{\partial_s \phi_\alpha}{|\nabla \phi_\alpha|^2} \right| \leq C |\nabla \phi_\alpha|^{-2} \leq C |(s,t)|^{-2}.
    \]
    since the first and second derivatives of $\phi_\alpha$ are bounded by a constant uniform for $\alpha \neq I$, as established by Lemma \ref{mixed bound} and the proof of Lemma \ref{pure second bounds}. Secondly,
    \[
        \left| \partial_s (1 - \psi(\lambda^{1/2}s, \lambda^{1/2}t)) \right| \leq \lambda^{1/2} |\partial_s \psi(\lambda^{1/2}s,\lambda^{1/2}t)| \leq C|(s,t)|^{-1}.
    \]
    All bounds hold similarly for the derivative in $t$. Hence by \eqref{i'm very very tired},
    \[
        |L^*[(1 - \psi(\lambda^{1/2}s, \lambda^{1/2}t)) a(s,t)]| \lesssim e^{CT} \lambda^{-1}|(s,t)|^{-2}
    \]
    and so using polar coordinates,
    \[
        |II| \lesssim e^{CT} \lambda^{-1} \int_{\lambda^{-1/2}}^{\diam \I \times \I} r^{-2} r \, dr \lesssim e^{CT} \lambda^{-1} \log \lambda.
    \]
    By \eqref{T = clog}, we absorb $\log \lambda$ into $e^{CT}$ and obtain the desired bound.

\bibliography{references}{}
\bibliographystyle{alpha}

\end{document}